\newtheorem{theorem}{Theorem}[section]
\newtheorem{cor}[theorem]{Corollary}
\newtheorem{prop}[theorem]{Proposition}
\theoremstyle{definition}
\theoremstyle{remark}
\newtheorem{remark}[theorem]{Remark}
\numberwithin{equation}{section}
\def\bM{\mathbb{M}}
\begin{document}
\baselineskip=15pt

\title{Diagonal and off-diagonal  blocks of positive definite partitioned matrices}

\author{ Jean-Christophe Bourin{\footnote{Funded by the ANR Projet (No.\ ANR-19-CE40-0002) and by the French Investissements
 d'Avenir program, project ISITE-BFC (contract ANR-15-IDEX-03).}} \,  and Eun-Young Lee{\footnote{This research was supported by
Basic Science Research Program through the National Research
Foundation of Korea (NRF) funded by the Ministry of
Education (NRF-2018R1D1A3B07043682)}  }    }

\date{ }

\maketitle

\vskip 10pt\noindent
{\small 
{\bf Abstract.} We obtain  new relations for  the  blocks of a positive semidefinite matrix $\begin{bmatrix} A& X \\ X^* &B\end{bmatrix}$ partitioned into four blocks in $\bM_n$. A consequence is
\begin{equation}\label{abs1}
|X+X^*| \le  A+B +\frac{1}{4}V(A+B)V^*
\end{equation}
for  some unitary $V\in \bM_n$. Here, for $n\ge  2$, the constant $1/4$ cannot be replaced by  any smaller one.   Several eigenvalue inequalities for  general matrices  follow from our result. We can also derive from \eqref{abs1} some triangle type inequalities, for instance
 for three contractions,
\begin{equation}\label{abs2}
|S+T+R| \le \frac{3}{4}I +|S|+|T|+|R|
\end{equation}
where $I$ stands for the identity. We conjecture that the constant $3/4$ is optimal.

\vskip 5pt\noindent
{\it Keywords.}    Partitioned matrices, positive definite matrices, matrix geometric mean eigenvalue inequalies, Schur product.
\vskip 5pt\noindent
{\it 2010 mathematics subject classification.} 15A42, 15A60,  47A30, 47A63.
}

\section{Arithmetic-geometric mean inequalities}

 Inequalities for positive (semi-definite) matrices  partitioned into four blocks provide a lot
of important theorems in Matrix analysis. Let us review in this introduction a few well-known examples related to matrix versions of the AGM inequality; the reader may find much more examples  in the  books \cite{Bhatia}, \cite{HP}. 

Let $\begin{bmatrix} A &X \\
X^* &B\end{bmatrix} $ be a  positive matrix  partitioned into four blocks in $\bM_n$, the space of $n$-by-$n$ matrices. An elementary inequality pointed out by Tao \cite{Tao} states that, for $j=1,\ldots, n$ 
\begin{equation}\label{eqT}
2\lambda_j^{\downarrow}(|X|) \le \lambda_j^{\downarrow}\left(\begin{bmatrix} A &X \\
X^* &B\end{bmatrix} \right)
\end{equation}
where $|X|$ is the positive part in the polar decomposition $X=U|X|$ and $\lambda_j^{\downarrow}(\cdot)$ stand for the eigenvalues arranged in the non-increasing order. In despite of its simplicity, Tao's result entails the famous inequality of Bhatia and Kittaneh \cite{BK1}: Let $A,B\in\bM_n$. Then there exists a unitary matrix $U\in\bM_n$ such that
\begin{equation}\label{eqBK}
|AB| \le U\frac{A^*A + BB^*}{2}U^*.
\end{equation}
Three remarkable extensions of this classical inequality have been given by Ando \cite{A2}, Audenaert \cite{Aud} (see also \cite[Sec. 5]{BK2}), and Drury \cite{D}.

Another kind of arithmetic geometric mean inequality follows from the positive partitioned matrix
$$
\sum_{i=1}^n\begin{bmatrix}  A^*_i \\
 B_i^* \end{bmatrix} 
\begin{bmatrix}  A_i &  B_i \end{bmatrix} 
=\begin{bmatrix} \sum_{i=1}^n A^*_iA_i & \sum_{i=1}^n A^*_iB_i \\
\sum_{i=1}^n B^*_iA_i & \sum_{i=1}^n B_i^*B_i\end{bmatrix} 
$$
where $A_i,B_i\in\bM_d$, $i=1,\ldots, n$.
Using the unitary congruence implemented by
$
\begin{bmatrix} I &0 \\
0 &V\end{bmatrix}
$
where $V$ is the unitary part in the polar decomposition $\sum_{i=1}^n B^*_iA_i =V|\sum_{i=1}^n B^*_iA_i |$, we obtain that 
$$
\begin{bmatrix} \sum_{i=1}^n A^*_iA_i & \left|\sum_{i=1}^n B^*_iA_i\right| \\
\left|\sum_{i=1}^n B^*_iA_i\right| & V^*\left(\sum_{i=1}^n B_i^*B_i\right)V\end{bmatrix} 
$$
is positive too, so that the maximal property  of the geometric mean $\#$ entails
\begin{equation}\label{eq1}\left|\sum_{i=1}^n B^*_iA_i\right|  \le \left(\sum_{i=1}^n A^*_iA_i \right)\# V^*\left(\sum_{i=1}^n B_i^*B_i\right)V,
\end{equation}
hence,
$$\left|\sum_{i=1}^n B^*_iA_i\right|  \le \frac{\sum_{i=1}^n A^*_iA_i  + V^*\left(\sum_{i=1}^n B_i^*B_i\right) V}{2}.
$$
We refer to   \cite{A1} and  \cite{Bhatia2} for a background on $\#$. Since  fundamental 1979 works of Ando  \cite{A1}, the operator geometric means is a wonderful tool when dealing with matrix inequalities.

In particular, letting $n=1$ in the last inequality,  we may complement \eqref{eqBK} with
$$
|B^*A| \le \frac{A^*A + V^*B^*BV}{2}
$$
and this was used   \cite[Lemma 2.5 ]{BH} in the context of von Neumann algebras. Since the geometric mean of two positive (semi-definite)  matrices $X,Y$ satisfies
\begin{equation}\label{eq-geom}
X\#Y =X^{1/2}WY^{1/2}
\end{equation}
for some unitary $W$  (if $X,Y$ are positive definite, then $W$ is unique),  a standard log-majorisation (Horn's inequality) on \eqref{eq1} gives
the Bhatia-Davis \cite{BD} type 
Schwarz inequality
\begin{equation}\label{eq-BD}
\left\| \, \left|\sum_{i=1}^n B^*_iA_i\right|^{\alpha} \, \right\|^2 \le 
\left\| \,\left|\sum_{i=1}^n B^*_iB_i\right|^{\alpha}\, \right\| \left\| \, \left|\sum_{i=1}^n A^*_iA_i\right|^{\alpha} \, \right\|
\end{equation}
for all $\alpha>0$ and all unitarily invariant norms. To illustrate the scope of this inequality, consider the normalized trace norm on $\bM_n$, $\| X\|_{\tau}:=n^{-1}{\mathrm{Tr\,}} |X|$. Then, using
$$
{\det}^{1/n} |X|=\lim_{\alpha \searrow 0}  \| \,|X|^{\alpha}\|_{\tau}^{1/\alpha},
$$
we infer from \eqref{eq-BD} the determinantal Schwarz inequality
$$
{\det}^2\left|\sum_{i=1}^n B^*_iA_i\right| \le \det \left(\sum_{i=1}^n B^*_iB_i\right) \det  \left(\sum_{i=1}^n A^*_iA_i\right).
$$

This trick in \eqref{eq1} with the unitary part of an off-diagonal block $X=V|X^*|$,
\begin{equation}\label{trick}
\begin{bmatrix} A&X^* \\ X&B
\end{bmatrix}\ge 0 \Longrightarrow |X|\le A\#(V^*BV)\le \frac{A+V^*BV}{2},
\end{equation}
goes back to the paper \cite[Eq. (2.8)]{BR}.
We used it in \cite{Lee} to study PPT matrices. In two more recent papers \cite{BL2018} \cite{BL2019}, it is employed to get striking inequalities for positive linear maps on normal operators, and related exotic eigenvalue estimates. For instance, for the Shur product $S \circ A$ of a positive matrix $S$ with   a contraction $A$, we 
have \cite[Corollary 3.2]{BL2019}
$$
\lambda_3^{\downarrow}(S \circ A) \le  \delta_2^{\downarrow}(S)
$$
where $\delta^{\downarrow}_2(S)$ denotes the second largest diagonal entries of $S$.

Thus, results for blocks of positive partitioned matrices have many applications, sometimes surprizing and for which no different proof are easily available. It may be of some interest to have more results for these blocks. We shall  give a few new ones in this paper, still via the trick \eqref{trick}. We will get again some nice inequalities, for instance for the Schur product of two positive matrices,
\begin{equation}\label{intro}
\lambda_3^{\downarrow} (|AB\circ BA|) \le \lambda_2^{\downarrow}(A^2\circ B^2).
\end{equation}

Section 2 gives our main result, Theorem \ref{mainThm}, which establishes several  sharp operator inequalities including  \eqref{abs1}. This theorem is in the  continuation of our previous works \cite{BL2018}, \cite{BL2019} where comparison between diagonal blocks and off-diagonal blocks leaded to remarkable estimates.  

Several eigenvalue estimates such as \eqref{intro} are derived in Section 3.  We will  also get generalizations of some eigenvalue estimates by Audeh, Bhatia, and Kittaneh.

In Section 4, we note that Theorem \ref{mainThm} completes a result for Schur product of normal operators first proved in \cite{BL2018}. We  also   point out  a special case of \eqref{abs1} showing that, given two Hermitian matrices such that $\pm S\le T$,  the sharp inequality
\begin{equation}\label{int}
|S| \le  T +\frac{1}{4}VTV^*
\end{equation}
holds for some unitary matrix $V$. 
A simple consequence of \eqref{int} yields some  triangle type inequalities for contractions such as \eqref{abs2}.

Though the paper is written in the setting of $\bM_n$, our results
have obvious extensions to operators in a von Neumann algebra ${\mathcal{M}}$,  relations such as  \eqref{abs1}  holding in $\bM_2({\mathcal{M}})$.

\section{Sharp inequalities for blocks of positive matrices}

\vskip 10pt
We use the symbol $\diamond$  to denote either the sum, or minus  or Schur product sign. By definition, {\it a symmetry matrix} is both Hermitian and unitary. So, a symmetry $V\in\bM_n$ is characterized by the property $V=V^*=V^{-1}$.

\vskip 10pt
\begin{theorem}\label{mainThm}  Let $\begin{bmatrix} A &X \\
X^* &B\end{bmatrix} $ be a positive matrix  partitioned into four blocks in $\bM_n$, Fix $\diamond\in\{+,\circ\} $. Then, for some   unitary $V_{\diamond}\in\bM_n$,
$$
|X\diamond X^*| \le A\diamond B +\frac{1}{4}V_{\diamond}(A\diamond B)V^*_{\diamond},
$$
where the constant $1/4$ cannot be replaced by any smaller one. 

Furthermore, we can require that $V_{\diamond}$ is a symmetry  and that we also have
$$
|X\diamond X^*| \le (A\diamond B)\#V_{\diamond}(A\diamond B)V_{\diamond}.
$$

\end{theorem}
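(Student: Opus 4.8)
The plan is to exploit the trick \eqref{trick} in a symmetric way, applying it to both off-diagonal blocks simultaneously, and then to average. Start from the positive matrix $\begin{bmatrix} A & X \\ X^* & B\end{bmatrix}\ge 0$. Writing $X\diamond X^* = W|X\diamond X^*|$ for the polar decomposition, the core idea is to produce, by a suitable unitary congruence, a $2\times 2$ block matrix whose $(1,2)$ block is $|X\diamond X^*|$ (or a piece of it) and whose diagonal blocks are controlled by $A\diamond B$ and $V_\diamond(A\diamond B)V^*_\diamond$ for a well-chosen $V_\diamond$. Concretely, for the sum case one considers $\begin{bmatrix} A & X \\ X^* & B\end{bmatrix} + \begin{bmatrix} B & X^* \\ X & A\end{bmatrix}\ge 0$, the second term being the congruence of the first by the flip $J=\begin{bmatrix} 0 & I \\ I & 0\end{bmatrix}$; this gives $\begin{bmatrix} A+B & X+X^* \\ X+X^* & A+B\end{bmatrix}\ge 0$. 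The Schur-product case is handled the same way using $\begin{bmatrix} A\circ B & X\circ X^* \\ X^*\circ X & A\circ B\end{bmatrix}\ge 0$, which follows since the Schur product of two positive matrices is positive (Schur product theorem) applied to $\begin{bmatrix} A & X \\ X^* & B\end{bmatrix}$ and $\begin{bmatrix} B & X^* \\ X & A\end{bmatrix}$.

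Next I would diagonalize. Once we have $M:=\begin{bmatrix} C & Y \\ Y & C\end{bmatrix}\ge 0$ with $C=A\diamond B$ Hermitian positive and $Y=X\diamond X^*$ Hermitian, I use the congruence by $\tfrac{1}{\sqrt 2}\begin{bmatrix} I & I \\ I & -I\end{bmatrix}$ to get $\begin{bmatrix} C+Y & 0 \\ 0 & C-Y\end{bmatrix}\ge 0$, i.e.\ $-C\le Y\le C$. Thus the real content reduces to the scalar-type fact recorded as \eqref{int}: if $S=Y$ and $T=C$ are Hermitian with $\pm S\le T$, then $|S|\le T+\tfrac14 VTV^*$ for a symmetry $V$, and moreover $|S|\le T\#(VTV^*)$. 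To prove this, decompose $S=S_+-S_-$ and let $V$ be the symmetry that is $+I$ on $\Ran S_+$ (closure) and $-I$ on its orthocomplement, so that $|S|=S + 2S_-$ and also $|S|=-S+2S_+$; one checks $S_\pm \le \tfrac14 VTV^*$ essentially because on the relevant subspaces the congruence $VTV^*$ "sees" the other sign of $S$ against the bound $T\ge \pm S$. The geometric-mean refinement then comes from the maximal property of $\#$: the positivity $\begin{bmatrix} T & |S| \\ |S| & VTV^*\end{bmatrix}\ge 0$ (to be verified directly from $\pm S\le T$ and the definition of $V$) yields $|S|\le T\#(VTV^*)$, and $T\#(VTV^*)\le \tfrac12(T+VTV^*)$; the sharper $\tfrac14$-coefficient version follows by a short optimization, or by noting the refined operator inequality $X\#Y\le \tfrac14(X + Y) + \tfrac14$-type bound is not needed—rather one establishes the $\begin{bmatrix} T & |S| \\ |S| & \tfrac14 VTV^*\end{bmatrix}$ positivity is false in general and instead the additive form with the $\tfrac14$ coefficient is extracted from a separate argument splitting $|S|$ across the two eigenspaces.

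The main obstacle, and the place requiring genuine care, is getting the constant $\tfrac14$ (rather than the easy $1$) and simultaneously arranging that $V_\diamond$ be a \emph{symmetry}. The naive congruence gives $|S|\le T + VTV^*$ with coefficient $1$; squeezing it to $\tfrac14$ demands exploiting that $S_+$ and $S_-$ live on orthogonal subspaces, so that the "cost" of dominating $|S|$ by $T$ and its reflected copy is halved on each piece and then halved again by a balanced choice. I would verify the sharpness claim separately by the $2\times 2$ example $A=B=\begin{bmatrix}1&0\\0&0\end{bmatrix}$, $X=\begin{bmatrix}0&1\\0&0\end{bmatrix}$ (suitably scaled), for which $|X+X^*|$ forces the coefficient up to $\tfrac14$; this also confirms that no symmetry $V$ can do better, pinning down optimality for all $n\ge 2$.
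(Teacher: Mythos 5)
Your setup is exactly the paper's: forming $\begin{bmatrix} A+B & X+X^* \\ X+X^* & A+B\end{bmatrix}\ge 0$ as the sum of the matrix and its flip-congruence, and the Schur-product analogue via the Schur product theorem. The gap is in the step that actually produces the theorem. The paper proceeds from the doubled matrix $\begin{bmatrix} C & Y \\ Y & C\end{bmatrix}\ge 0$, $C=A\diamond B$, $Y=X\diamond X^*$ Hermitian, by writing $Y=V|Y|$ with $V$ a symmetry commuting with $|Y|$, so the congruence by $I\oplus V$ gives $\begin{bmatrix} C & |Y| \\ |Y| & VCV\end{bmatrix}\ge 0$, hence $|Y|\le C\,\#\,(VCV)$ by the maximal property of $\#$, and then the additive bound is immediate from homogeneity of the geometric mean: $C\#(VCV)=(2C)\#\bigl(\tfrac12 VCV\bigr)\le C+\tfrac14 VCV$. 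You gesture at this ("a short optimization") but do not commit to it or carry it out; instead you lean toward a "separate argument splitting $|S|$ across the two eigenspaces," and the concrete claims you make in that direction are false. Indeed, take $T=\diag(2,1/2)$ and $S=\begin{pmatrix}0&1\\1&0\end{pmatrix}$, so $\pm S\le T$; your symmetry $V$ (equal to $+I$ on $\Ran S_+$, $-I$ on its orthocomplement) is the flip, $VTV=\diag(1/2,2)$, and $S_+\le\tfrac14 VTV$ fails ($S_+$ has eigenvalue $1$ while $\tfrac14 VTV$ has norm $1/2$); likewise the route $|S|=S+2S_-\le T+2S_-$ would need $S_-\le\tfrac18 VTV$, also false. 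So in your write-up neither the geometric-mean inequality nor the $1/4$-additive inequality is actually proved; the missing idea is simply the scaling invariance $X\#Y=(aX)\#(a^{-1}Y)$ with $a=2$, which turns the $\#$-bound into the sharp additive one with no eigenspace analysis at all (your "naive congruence gives coefficient $1$" is also off: ordinary AGM already gives $1/2$, and $1/4$ costs nothing more).

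The optimality discussion is also defective. Your proposed example $A=B=\diag(1,0)$, $X=\begin{pmatrix}0&1\\0&0\end{pmatrix}$ is not a positive block matrix: the principal submatrix $\begin{pmatrix}1&1\\1&0\end{pmatrix}$ formed from $A_{11}$, $X_{12}$, $B_{22}$ has negative determinant, and no rescaling of $X$ repairs $B_{22}=0$; one must take $B\ne A$, as in the paper's example $A=\diag(t,0)$, $B=\diag(0,t^{-1})$, $X=E_{12}$ with $t=1/2$, where equality of top eigenvalues is then checked explicitly. Moreover the theorem asserts sharpness of $1/4$ for each $\diamond\in\{+,\circ\}$, and you give no example for the Schur case (the paper uses $A=B=\diag(t^{1/2},t^{-1/2})$ and $X=\begin{pmatrix}0&1\\1&0\end{pmatrix}$). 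Finally, the theorem also requires $V_\diamond$ to be a symmetry; this comes for free in the paper because $X\diamond X^*$ is Hermitian, so its polar unitary can be taken Hermitian — a point worth stating explicitly rather than leaving implicit.
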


\vskip 10pt
Note that in general $X\circ X^*$ is not positive, even for $X=X^*$.

\vskip 10pt
\begin{proof} First we consider $\diamond=+$. Since both
\begin{equation}\label{pair}
\begin{bmatrix} A &X \\
X^* &B
\end{bmatrix} \quad{\mathrm{and}} \quad
\begin{bmatrix} B &X^* \\
X &A
\end{bmatrix} 
\end{equation}
are positive, the second matrix being derived from the first one with the unitary congruence implemented by $\begin{bmatrix} 0 &I  \\
I & 0
\end{bmatrix}$, so is their sum
$$
\begin{bmatrix} A+ B &X + X^* \\
X+X^* & A+ B
\end{bmatrix}.
$$
Arguing as in the proof of \eqref{eq1}, we obtain a positive block-matrix
$$
\begin{bmatrix} A+ B &|X + X^*| \\
|X+X^*| & V_{+}(A+ B)V_{+}^*
\end{bmatrix}.
$$
 with $V_+=V_+^*$, the unitary part in the polar decomposition of the Hermitian matrix $X+X^*=V_{+}|X+X^*|$. By the maximal property of $\#$
we get
$$
|X +X^*| \le (A+ B)\#V_{+}(A+ B)V_{+}.
$$
This proves the second inequality of the theorem for $\diamond =+$. 
From the operator AGM inequality
\begin{equation}\label{agm}
X\#Y =(2s^{-1}X)\#(2^{-1}sY) \le s^{-1}X+\frac{s}{4}Y
\end{equation}
for $s>0$ we then infer the first inequality of the theorem (with $s=1/4$), for $\diamond =+$. This first inequality is equivalent to the eigenvalue inequality
\begin{equation}\label{eig1}
\lambda_j^{\downarrow}\left(|X+X^*|-(A+B)\right) \le \frac{1}{4}\lambda_j^{\downarrow}\left(A+B\right).
\end{equation}
Now, consider for $t>0$,
\begin{equation}\label{niceex}
\begin{bmatrix} A  &X  \\
X^* &  B
\end{bmatrix}=
\begin{bmatrix} \begin{pmatrix}  t&0 \\ 0&0\end{pmatrix}& \begin{pmatrix}  0&1 \\ 0&0\end{pmatrix} \\
 \begin{pmatrix}  0&0 \\ 1&0\end{pmatrix} & \begin{pmatrix}  0&0 \\ 0&t^{-1}\end{pmatrix}
\end{bmatrix}
\end{equation}
so that \eqref{eig1} for $j=1$ and $t=1/2$ reads as the equality
$$
\lambda_1^{\downarrow}\left(\begin{bmatrix} 1-t&0\\ 0&1-1/t\end{bmatrix}\right) =1/2=
 \frac{1}{4}\lambda_1^{\downarrow}\left(\begin{bmatrix} t&0\\ 0&1/t\end{bmatrix}\right)
$$
showing that the constant $1/4$ cannot be diminished in \eqref{eig1}, and thus is optimal in the first inequality of our theorem.

We turn to the proof for $\diamond =\circ$,  similar to the $+$ case, as
$$
\begin{bmatrix} A\circ B &X\circ X^* \\
X\circ X^* & A\circ B
\end{bmatrix}
$$
is positive since it is the Shur product of the two positive matrices in \eqref{pair}. Thus, with
$V_{\circ}=V_{\circ}^*$  the unitary part in the polar decomposition of the Hermitian matrix $X\circ X^*=V_{\circ}|X\circ X^*|$ we see that
$$
\begin{bmatrix} A\circ B &|X \circ X^*| \\
|X\circ X^*| & V_{\circ}(A\circ B)V_{\circ}
\end{bmatrix}
$$ is positive. Using again the maximal property of $\#$ completes the proof of the second inequality of the theorem for $\diamond =\circ$. The first inequality follows  from \eqref{agm} with $s=1/4$. This first inequality is equivalent to the eigenvalue inequality
\begin{equation}\label{eig2}
\lambda_j^{\downarrow}\left(|X\circ X^*|-A\circ B\right) \le \frac{1}{4}\lambda_j^{\downarrow}\left(A\circ B\right).
\end{equation}
Next, choose for $t>0$,
$$
\begin{bmatrix} A  &X  \\
X^* &  B
\end{bmatrix}=
\begin{bmatrix} \begin{pmatrix}  t^{1/2}&0 \\ 0&t^{-1/2}\end{pmatrix}& \begin{pmatrix}  0&1 \\ 1&0\end{pmatrix} \\
 \begin{pmatrix}  0&1 \\ 1&0\end{pmatrix} & \begin{pmatrix}  t^{1/2}&0 \\ 0&t^{-1/2}\end{pmatrix}
\end{bmatrix}
$$
so that \eqref{eig2} for $j=1$ and $t=1/2$ reads as the equality
$$
\lambda_1^{\downarrow}\left(\begin{bmatrix} 1-t&0\\ 0&1-1/t\end{bmatrix}\right) =1/2=
 \frac{1}{4}\lambda_1^{\downarrow}\left(\begin{bmatrix} t&0\\ 0&1/t\end{bmatrix}\right)
$$
showing that the constant $1/4$ cannot be diminished in \eqref{eig2}, and thus is optimal in the first inequality of our theorem for the Schur product too.
\end{proof}

We have a variation of Theorem \ref{mainThm} for the minus sign as follows.

\vskip 10pt
\begin{cor}\label{corminus}  Let $\begin{bmatrix} A &X \\
X^* &B\end{bmatrix} $ be a positive matrix  partitioned into four blocks in $\bM_n$. Then, for some   symmetry $V_{-}\in\bM_n$,
$$
|X- X^*| \le (A+ B)\#V_{-}(A+ B)V_{-}
$$
and
$$
|X- X^*| \le (A+ B) +\frac{1}{4}V_{-}(A+ B)V_{-}
$$
where the constant $1/4$ is optimal.
\end{cor}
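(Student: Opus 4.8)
The plan is to deduce both inequalities directly from the case $\diamond=+$ of Theorem~\ref{mainThm}, the only point to check being that $X-X^*$ is skew-Hermitian, not Hermitian, so that a small twist is needed before the theorem applies. First I would apply to $\begin{bmatrix} A & X\\ X^* & B\end{bmatrix}$ the unitary congruence implemented by $\begin{bmatrix} I & 0\\ 0 & -iI\end{bmatrix}$, which yields the positive partitioned matrix
$$
\begin{bmatrix} A & -iX\\ iX^* & B\end{bmatrix}\ge 0 \qquad (\text{note }(-iX)^*=iX^*).
$$
Theorem~\ref{mainThm} with $\diamond=+$, applied to this matrix, then provides a symmetry $V_-\in\bM_n$ with
$$
\bigl|(-iX)+(-iX)^*\bigr|\le (A+B)\#V_-(A+B)V_-
\quad\text{and}\quad
\bigl|(-iX)+(-iX)^*\bigr|\le (A+B)+\tfrac14 V_-(A+B)V_- .
$$
Since $(-iX)+(-iX)^*=-i(X-X^*)$ and a unimodular scalar does not affect the modulus of a matrix (indeed $(\lambda Z)^*(\lambda Z)=Z^*Z$ when $|\lambda|=1$), we have $\bigl|(-iX)+(-iX)^*\bigr|=|X-X^*|$, and the two displayed inequalities become exactly the two asserted in the corollary.

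For the optimality of $1/4$ I would recycle the matrix \eqref{niceex}. With $t=1/2$ there, $X-X^*=\begin{pmatrix} 0 & 1\\ -1 & 0\end{pmatrix}$, hence $|X-X^*|=I$, while $A+B=\diag(1/2,2)$. If $|X-X^*|\le (A+B)+cV(A+B)V^*$ held for some $c\ge 0$ and some unitary $V\in\bM_2$, then subtracting $A+B$ and invoking the monotonicity of $\lambda_1^{\downarrow}(\cdot)$ would give
$$
\tfrac12=\lambda_1^{\downarrow}\bigl(|X-X^*|-(A+B)\bigr)\le\lambda_1^{\downarrow}\bigl(cV(A+B)V^*\bigr)=c\,\lambda_1^{\downarrow}(A+B)=2c ,
$$
so $c\ge 1/4$. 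Hence $1/4$ cannot be lowered, while it is attained thanks to the first part. (Equivalently, as in Theorem~\ref{mainThm}, the first inequality of the corollary is the eigenvalue inequality $\lambda_j^{\downarrow}(|X-X^*|-(A+B))\le\tfrac14\lambda_j^{\downarrow}(A+B)$, which \eqref{niceex} with $t=1/2$ turns into an equality at $j=1$.)

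I do not expect a genuine obstacle here: the entire content is the observation that the harmless congruence by $\begin{bmatrix} I & 0\\ 0 & -iI\end{bmatrix}$ converts the skew-Hermitian block $X-X^*$ into the Hermitian block $-i(X-X^*)$, after which the polar-decomposition argument of Theorem~\ref{mainThm} --- which is precisely what furnishes the \emph{symmetry} $V_-$ rather than a mere unitary --- runs verbatim. The only minor care is that the same $V_-$ serves both inequalities, which is automatic since it is the symmetry delivered by the theorem; and that the theorem's own extremal example already settles optimality for the minus sign, because $|X-X^*|=I=|X+X^*|$ for \eqref{niceex}.
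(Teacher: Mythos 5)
Your argument is correct and is essentially the paper's own proof: both multiply the off-diagonal block by a unimodular scalar via the congruence by $\mathrm{diag}(I,\pm iI)$, apply the $\diamond=+$ case of Theorem \ref{mainThm} to the resulting positive matrix (using $|{\pm i}(X-X^*)|=|X-X^*|$ and the symmetry the theorem provides), and check optimality with the example \eqref{niceex} at $t=1/2$. The only differences are the harmless sign convention $-i$ versus $i$ and that you spell out the Weyl-monotonicity step the paper leaves implicit.
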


\vskip 10pt 
\begin{proof}
We apply the $+$ sign case of the theorem to the partitioned matrices
$$
\begin{bmatrix} A &iX \\ -iX^*&B\end{bmatrix} \quad{\mathrm{and}} \quad
\begin{bmatrix} B &-iX^* \\
iX &A
\end{bmatrix} 
$$
which are positive  since
 derived from the first matrix in \eqref{pair} with the unitary congruence implemented by
$\begin{bmatrix} I&0 \\ 0&iI\end{bmatrix}$. Thus $|iX-iX^*|=|X-X^*|$ and $V_{-}$ is the 
unitary part in the polar decomposition $iX-iX^*=V_{-}|X-X^*|$.
Note that the  proof  uses the complex number $i$, however, if $X$ has only real entries, then so has $V_{-}$. To show that $1/4$ is optimal in the second inequality of the corollary, we use again the partitioned matrix \eqref{niceex} with $t=1/2$.
\end{proof}

Of course, from the matrix AGM inequality, we  also have the following   consequence of Theorem \ref{mainThm}.

\vskip 10pt
\begin{cor}\label{cormean}  Let $\begin{bmatrix} A &X \\
X^* &B\end{bmatrix} $ be a positive matrix  partitioned into four blocks in $\bM_n$ and fix $\diamond\in\{+,-\}$. Then, for some symmetry $V_{\diamond}\in\bM_n$,
$$
|X\diamond X^*| \le \frac{(A+B)+V_{\diamond}(A+ B)V_{\diamond}}{2}
$$
\end{cor}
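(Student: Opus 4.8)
The plan is to obtain the corollary directly from the geometric-mean versions of the results already proved. Fix $\diamond\in\{+,-\}$. When $\diamond=+$ one has $A\diamond B=A+B$, and the ``Furthermore'' part of Theorem~\ref{mainThm} supplies a symmetry $V_{+}\in\bM_n$ with
$$
|X+X^*|\le (A+B)\,\#\,V_{+}(A+B)V_{+}.
$$
When $\diamond=-$, Corollary~\ref{corminus} supplies a symmetry $V_{-}\in\bM_n$ with
$$
|X-X^*|\le (A+B)\,\#\,V_{-}(A+B)V_{-}.
$$
Thus in both cases there is a symmetry $V_{\diamond}$ such that $|X\diamond X^*|\le (A+B)\,\#\,V_{\diamond}(A+B)V_{\diamond}$.

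Next I would apply the operator arithmetic--geometric mean inequality $C\,\#\,D\le \frac12(C+D)$, valid for all positive semidefinite $C,D\in\bM_n$ (see \cite{A1}, \cite{Bhatia2}), with $C=A+B$ and $D=V_{\diamond}(A+B)V_{\diamond}$; here $D\ge 0$ because $V_{\diamond}$ is unitary and $A+B\ge 0$. Chaining this with the previous bound gives
$$
|X\diamond X^*|\le (A+B)\,\#\,V_{\diamond}(A+B)V_{\diamond}\le \frac{(A+B)+V_{\diamond}(A+B)V_{\diamond}}{2},
$$
which is exactly the claimed inequality.

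There is essentially no obstacle: the corollary is an immediate consequence of Theorem~\ref{mainThm} and Corollary~\ref{corminus} together with the standard AGM bound for $\#$. The only point to keep in mind is that one must invoke the symmetry (as opposed to merely unitary) form of $V_{\diamond}$ furnished by those statements, so that the same matrix $V_{\diamond}$ occurs on both sides as $V_{\diamond}(A+B)V_{\diamond}$ rather than $V_{\diamond}(A+B)V_{\diamond}^{*}$; this is already built into the cited results. One could also remark that, unlike in Theorem~\ref{mainThm}, the question of optimality of the constant $\frac12$ is not raised here.
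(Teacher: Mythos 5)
Your argument is correct and matches the paper's intent exactly: the paper derives Corollary~\ref{cormean} precisely by applying the arithmetic--geometric mean inequality $C\,\#\,D\le\frac{1}{2}(C+D)$ to the geometric-mean bounds of Theorem~\ref{mainThm} (for $\diamond=+$) and Corollary~\ref{corminus} (for $\diamond=-$), with the symmetry $V_{\diamond}$ taken from those results. Nothing is missing.
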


\section{Results from the geometric mean}

This section is devoted to easy consequences of the second inequality of Theorem \ref{mainThm} involving the geometric mean. By using this  second inequality combined with \eqref{eq-geom} and a basic inequality of Weyl, we   get the following corollary.

\vskip 10pt
\begin{cor}\label{corShur2}  Let $\begin{bmatrix} A &X \\
X^* &B\end{bmatrix} $ be a positive matrix  partitioned into four blocks in $\bM_n$, and let $\diamond\in\{+,\circ \, -\} $. Then, for all nonnegative integers $j$ and $k$,
$$
\lambda^{\downarrow 2}_{1+j+k}(|X\diamond X^*|) \le \lambda_{1+j}^{\downarrow}(A \diamond B)\lambda_{1+k}^{\downarrow}(A \diamond B)
$$
\end{cor}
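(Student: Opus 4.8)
The plan is to feed the geometric-mean bound already at hand — the second inequality of Theorem \ref{mainThm} when $\diamond\in\{+,\circ\}$, and the corresponding inequality of Corollary \ref{corminus} when $\diamond=-$ — into the factorization \eqref{eq-geom} and then apply Weyl's multiplicative inequality for singular values.

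First I would fix $C:=A\diamond B$ when $\diamond\in\{+,\circ\}$ and $C:=A+B$ when $\diamond=-$; in each case $C\ge 0$, being a diagonal block, a Schur product of positive matrices, or a sum of positive matrices. The cited results give a symmetry $V\in\bM_n$ with $|X\diamond X^*|\le C\#VCV$. Since a geometric mean of positive matrices is positive, monotonicity of eigenvalues yields $\lambda_m^{\downarrow}(|X\diamond X^*|)\le\lambda_m^{\downarrow}(C\#VCV)$ for every $m$.

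Next I would write $C\#VCV=C^{1/2}W(VCV)^{1/2}$ with $W$ unitary, using \eqref{eq-geom}, and note that $C\#VCV$ is positive, so $\lambda_{1+j+k}^{\downarrow}(C\#VCV)=\lambda_{1+j+k}^{\downarrow}(|C^{1/2}W(VCV)^{1/2}|)$. Weyl's inequality $\lambda_{p+q-1}^{\downarrow}(|ST|)\le\lambda_p^{\downarrow}(|S|)\,\lambda_q^{\downarrow}(|T|)$, applied with $S=C^{1/2}$, $T=W(VCV)^{1/2}$, $p=1+j$, $q=1+k$ (so that $p+q-1=1+j+k$), then gives
$$\lambda_{1+j+k}^{\downarrow}(C\#VCV)\le\lambda_{1+j}^{\downarrow}(C^{1/2})\,\lambda_{1+k}^{\downarrow}\bigl(|W(VCV)^{1/2}|\bigr)=\lambda_{1+j}^{\downarrow}(C)^{1/2}\,\lambda_{1+k}^{\downarrow}(C)^{1/2},$$
where the last equality uses that left multiplication by the unitary $W$ leaves singular values unchanged and that $VCV=VCV^{*}$, a unitary congruence of $C$, has the same eigenvalues as $C$. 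Combining with the previous paragraph and squaring delivers the claim.

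I do not anticipate a genuine obstacle here; the argument is essentially bookkeeping. The only points needing care are the passage between eigenvalues and singular values for the positive matrices involved, the index shift $p+q-1=1+j+k$ in Weyl's inequality, and — in the case $\diamond=-$ — remembering that the matrix playing the role of $C$ is $A+B$ and not $A-B$, as forced by Corollary \ref{corminus}.
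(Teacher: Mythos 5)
Your argument is exactly the route the paper intends: the second inequality of Theorem \ref{mainThm} (resp.\ Corollary \ref{corminus} for the minus sign), the factorization \eqref{eq-geom} of the geometric mean, and Weyl--Horn's multiplicative singular value inequality with the index shift $p+q-1=1+j+k$, together with the unitary congruence $VCV\cong C$. Your reading of the $\diamond=-$ case, with $A+B$ playing the role of $C$, is also the correct interpretation, so the proposal is complete and matches the paper's (sketched) proof.
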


\vskip 10pt
In particular,  for $j=0,1,\ldots, n-1$, we have $
\lambda_{2j+1}^{\downarrow}(|X+X^*|) \le \lambda_{j+1}^{\downarrow}(A+ B).
$
This inequality applied   to the positive block-matrix
\begin{equation}\label{spe}
\begin{bmatrix} A^*A & A^*B\\ B^*A
& B^*B\end{bmatrix} 
\end{equation}
yields an inequality of Bhatia-Kittaneh \cite[Proposition 6.2]{BK2}. The next corollary, a special case of Corollary \ref{corShur2} with \eqref{spe}, says  more.

\vskip 10pt
\begin{cor}\label{corShur4}  Let $A$ and $B$ be two  matrices in $\bM_n$, and let $\diamond\in\{+,\circ \} $. Then, for all nonnegative integers $j$ and $k$,
$$
\lambda^{\downarrow 2}_{1+j+k}(|A^*B\diamond B^*A|) \le \lambda_{1+j}^{\downarrow}(A^*A \diamond B^*B)\lambda_{1+k}^{\downarrow}(A^*A \diamond B^*B)
$$
\end{cor}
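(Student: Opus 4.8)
The plan is to obtain Corollary \ref{corShur4} as a direct specialization of Corollary \ref{corShur2}; there is no new idea to supply, only the right choice of partitioned matrix. The one ingredient needed is the observation that the block matrix in \eqref{spe} is positive semidefinite: regarding $C=\begin{bmatrix} A & B\end{bmatrix}\in\bM_{n,2n}$ as a $1\times 2$ block row, one has
$$
\begin{bmatrix} A^*A & A^*B\\ B^*A & B^*B\end{bmatrix}=C^*C\ge 0 ,
$$
so \eqref{spe} meets the hypotheses of Corollary \ref{corShur2}.

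I would then apply Corollary \ref{corShur2} to \eqref{spe}, i.e. with the diagonal blocks $A^*A$ and $B^*B$ and off-diagonal block $A^*B$, whose adjoint is $(A^*B)^*=B^*A$. Under these substitutions $X\diamond X^*$ becomes $A^*B\diamond B^*A$ and $A\diamond B$ becomes $A^*A\diamond B^*B$, so the conclusion of Corollary \ref{corShur2} reads verbatim
$$
\lambda^{\downarrow 2}_{1+j+k}(|A^*B\diamond B^*A|)\le \lambda_{1+j}^{\downarrow}(A^*A\diamond B^*B)\,\lambda_{1+k}^{\downarrow}(A^*A\diamond B^*B)
$$
for all nonnegative integers $j,k$, which is exactly the asserted inequality. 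Both values $\diamond\in\{+,\circ\}$ are legitimate, since for the Schur product the positivity that is really used — that of the Schur square of \eqref{spe} — has already been incorporated into Theorem \ref{mainThm} and hence into Corollary \ref{corShur2}.

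I expect essentially no obstacle: this is a corollary in the literal sense. The only point worth a short remark is why one does not also allow the minus sign here; the reason is that for $\diamond=-$ Corollary \ref{corShur2} compares $|X-X^*|$ with $A+B$ rather than with $A-B$ (see Corollary \ref{corminus}), so the specialization would not produce the symmetric form displayed above. I would close by noting, as the surrounding text already does, that taking $j=k$ and $\diamond=+$ recovers \cite[Proposition 6.2]{BK2}, so Corollary \ref{corShur4} is a genuine strengthening — it permits $j\neq k$ and covers the Schur product as well.
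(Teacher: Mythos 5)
Your proposal is correct and coincides with the paper's own argument: Corollary \ref{corShur4} is obtained there precisely by applying Corollary \ref{corShur2} to the positive block matrix \eqref{spe}, whose positivity follows exactly as you note from writing it as $C^*C$ with $C=\begin{bmatrix} A & B\end{bmatrix}$. Nothing further is needed.
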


\vskip 10pt
The case $j=k=1$ was already stated in \eqref{intro}  for $\diamond= \circ$ and two positive matrices $A$ and $B$.

Some classical log-majorisation principle  allow to derive from Theorem \ref{mainThm} the next norm estimates.

\vskip 10pt
\begin{cor}\label{corShur3}  Let $\begin{bmatrix} A &X \\
X^* &B\end{bmatrix} $ be a positive matrix  partitioned into four blocks in $\bM_n$, and let $\diamond\in\{+,\circ \} $. Then, for all $\alpha>0$ and all unitarily invariant norms,
$$
\left\| |X\diamond X^*|^{\alpha}\right\| \le  \left\|(A\diamond B)^{\alpha}\right\|.
$$
\end{cor}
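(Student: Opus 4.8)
The plan is to derive the unitarily invariant norm estimate from the second inequality of Theorem \ref{mainThm} (the geometric mean form) via a standard log-majorisation argument. Recall that the relation $Z \prec_{\log} W$ for positive semidefinite $Z,W$ (meaning $\prod_{j=1}^k \lambda_j^{\downarrow}(Z) \le \prod_{j=1}^k \lambda_j^{\downarrow}(W)$ for all $k$, with equality at $k=n$) implies, by the Ky Fan dominance type principle together with the fact that $t\mapsto \log t$ is concave, the weak majorisation $\lambda^{\downarrow}(Z^{\alpha}) \prec_w \lambda^{\downarrow}(W^{\alpha})$ for every $\alpha>0$, and hence $\||Z|^{\alpha}\| \le \||W|^{\alpha}\|$ for every unitarily invariant norm. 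So it suffices to establish the log-majorisation
\[
|X\diamond X^*| \prec_{\log} A\diamond B.
\]

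First I would invoke the second inequality of Theorem \ref{mainThm}, namely $|X\diamond X^*| \le (A\diamond B)\# V_{\diamond}(A\diamond B)V_{\diamond}$ with $V_{\diamond}$ a symmetry. Since eigenvalues are monotone under the positive semidefinite order, it is enough to prove $(A\diamond B)\# V_{\diamond}(A\diamond B)V_{\diamond} \prec_{\log} A\diamond B$. Writing $C := A\diamond B \ge 0$ and using \eqref{eq-geom}, we have $C \# VCV = C^{1/2} W (VCV)^{1/2}$ for some unitary $W$, hence $\lambda_j^{\downarrow}(C\#VCV) = s_j^{\downarrow}(C^{1/2}W(VCV)^{1/2})$, the singular values of a product. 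By Horn's inequality for singular values of products, $s^{\downarrow}(C^{1/2}W(VCV)^{1/2}) \prec_{\log} s^{\downarrow}(C^{1/2})\cdot s^{\downarrow}(W(VCV)^{1/2})$ entrywise, and since $W$ is unitary and $V$ is a symmetry, $s_j^{\downarrow}(W(VCV)^{1/2}) = s_j^{\downarrow}((VCV)^{1/2}) = \lambda_j^{\downarrow}(C)^{1/2} = s_j^{\downarrow}(C^{1/2})$. Therefore
\[
\prod_{j=1}^k \lambda_j^{\downarrow}(C\#VCV) \le \prod_{j=1}^k \lambda_j^{\downarrow}(C)^{1/2}\lambda_j^{\downarrow}(C)^{1/2} = \prod_{j=1}^k \lambda_j^{\downarrow}(C)
\]
for every $k$, which is precisely $C\#VCV \prec_{\log} C$. (One may note more directly that $C\# VCV \le C\#\|C\|V V = \|C\|^{1/2}C^{1/2}$ is too crude; the Horn argument above is the clean route, and in fact $C\#D \prec_{\log} C\,\natural_{1/2}\,$ no—simply: the geometric mean satisfies $\lambda^{\downarrow}(C\# D)\prec_{\log}(\lambda^{\downarrow}(C)\lambda^{\downarrow}(D))^{1/2}$ in general, applied with $D = VCV$ having the same eigenvalues as $C$.)

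Combining the two displays, $|X\diamond X^*| \prec_{\log} A\diamond B$, and the passage from log-majorisation to the asserted norm inequality for all $\alpha>0$ and all unitarily invariant norms is the routine principle recalled above. The only mild subtlety — the place I would be most careful about — is the reduction step: log-majorisation is not preserved under the positive semidefinite order in general (it is weak majorisation of eigenvalues, not of the matrices), so one must first pass to eigenvalues via monotonicity $\lambda_j^{\downarrow}(|X\diamond X^*|) \le \lambda_j^{\downarrow}((A\diamond B)\#V_{\diamond}(A\diamond B)V_{\diamond})$ and only then apply the Horn estimate to the right-hand side. Everything else is bookkeeping with standard majorisation machinery, exactly as in the derivation of \eqref{eq-BD} from \eqref{eq1} sketched in the introduction.
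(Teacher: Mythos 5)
Your argument is correct and is essentially the paper's intended route: combine the geometric-mean inequality of Theorem \ref{mainThm} with the representation \eqref{eq-geom}, Horn's log-majorisation for singular values of a product, and the standard passage from (weak) log-majorisation to unitarily invariant norm inequalities for all powers $\alpha>0$, exactly as sketched in the derivation of \eqref{eq-BD}. The only nitpick is terminology: what you actually obtain (and all that is needed) is the weak log-majorisation $|X\diamond X^*|\prec_{w\log} A\diamond B$, not $\prec_{\log}$ with equality at $k=n$.
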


\vskip 10pt
\begin{cor}\label{corShur5}  Let $A$ and $B$  be two  matrices in $\bM_n$, and let $\diamond\in\{+,\circ \} $. Then, 
 for all $\alpha>0$ and all unitarily invariant norms,
$$
\left\| |A^*B\diamond B^*A|^{\alpha}\right\| \le \left\| |A^*A\diamond B^*B|^{\alpha}\right\|.
$$
\end{cor}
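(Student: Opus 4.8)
The plan is to deduce Corollary \ref{corShur5} from Corollary \ref{corShur3} by the same specialization already used to pass from Corollary \ref{corShur2} to Corollary \ref{corShur4}, namely by applying the general statement to the particular positive partitioned matrix
\[
\begin{bmatrix} A^*A & A^*B\\ B^*A & B^*B\end{bmatrix}
= \begin{bmatrix} A^*\\ B^*\end{bmatrix}\begin{bmatrix} A & B\end{bmatrix}\ge 0.
\]
With this choice one has $X=A^*B$, hence $X^*=B^*A$, and the diagonal blocks become $A^*A$ and $B^*B$; then $X\diamond X^*=A^*B\diamond B^*A$ and $A\diamond B$ (in the notation of Corollary \ref{corShur3}, where the first diagonal block plays the role of $A$ and the second that of $B$) becomes $A^*A\diamond B^*B$. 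Plugging these into the conclusion of Corollary \ref{corShur3} gives exactly
\[
\left\| |A^*B\diamond B^*A|^{\alpha}\right\| \le \left\| |A^*A\diamond B^*B|^{\alpha}\right\|,
\]
for all $\alpha>0$ and all unitarily invariant norms, which is the claim.

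The only point that needs a word of care is that the right-hand side of Corollary \ref{corShur3} was written as $\|(A\diamond B)^{\alpha}\|$ under the tacit understanding that $A\diamond B$ is positive semidefinite (it is a diagonal block of a positive matrix when $\diamond=+$, or a Schur product of two positive matrices when $\diamond=\circ$, in either case positive). After the substitution, $A^*A\diamond B^*B$ is again positive semidefinite — it is $A^*A+B^*B$ or $A^*A\circ B^*B$ — so writing $|A^*A\diamond B^*B|^{\alpha}$ is the same as $(A^*A\diamond B^*B)^{\alpha}$ and is consistent with the statement. Thus no separate argument is required; the corollary is a pure specialization.

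There is essentially no obstacle here: the work has all been done in Theorem \ref{mainThm} and in the log-majorization step yielding Corollary \ref{corShur3}. If anything, the mild subtlety is simply to observe that the case $\diamond=-$ is \emph{not} claimed in Corollary \ref{corShur5}, matching the fact that Corollary \ref{corShur3} is stated only for $\diamond\in\{+,\circ\}$ (the minus-sign version of Theorem \ref{mainThm}, in Corollary \ref{corminus}, controls $|X-X^*|$ by $A+B$ rather than by $A-B$, so it does not feed a norm bound of the present homogeneous shape). One should therefore restrict $\diamond\in\{+,\circ\}$ throughout and simply invoke Corollary \ref{corShur3} applied to \eqref{spe}.
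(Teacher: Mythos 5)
Your proof is correct and is exactly the route the paper intends: Corollary \ref{corShur5} is obtained by specializing Corollary \ref{corShur3} to the positive partitioned matrix \eqref{spe}, just as Corollary \ref{corShur4} specializes Corollary \ref{corShur2}. Nothing further is needed.
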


\vskip 10pt
Denote by $\delta^{\downarrow}_{k}(H)$, $k=1,2,\ldots,n$,  the $k$-largest diagonal entries of a Hermitian matrix $H\in\bM_n$.

\vskip 10pt
\begin{cor}\label{corShur6}  For all matrices $Z\in\bM_n$ and
   $j=0,1,\ldots, n-1$,
$$
 \lambda_{1+2j}^{\downarrow}\left(|Z\circ Z^*|\right) \le \min\{\delta^{\downarrow}_{1+j}(Z^*Z), \delta^{\downarrow}_{1+j}(ZZ^*)\}.
$$
\end{cor}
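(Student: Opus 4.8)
The plan is to realize $|Z\circ Z^*|$ as an off-diagonal block quantity of a positive partitioned matrix and then invoke Corollary \ref{corShur4} (the case $\diamond=\circ$). The natural choice is $A=Z$ and $B=I$, so that $A^*B\circ B^*A = Z^*I\circ I^*Z = Z^*\circ Z = (Z\circ Z^*)^*$ after noting $Z^*\circ Z = \overline{Z\circ Z^*}^{\,t}$ — more precisely, since the Schur product is entrywise, $(Z\circ Z^*)^* = \overline{Z}^{\,t}\circ \overline{Z^*}^{\,t}$ and $|W|=|W^*|$ only up to unitary equivalence, so it is cleaner to take $A=I$, $B=Z$, giving $A^*B\circ B^*A = Z\circ Z^*$ directly. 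With this substitution, $A^*A\circ B^*B = I\circ Z^*Z = \diag(Z^*Z)$, the diagonal matrix built from the diagonal entries of $Z^*Z$, whose eigenvalues arranged in non-increasing order are exactly the $\delta^{\downarrow}_k(Z^*Z)$. Then Corollary \ref{corShur4} with $j=k$ yields $\lambda^{\downarrow 2}_{1+2j}(|Z\circ Z^*|)\le \big(\delta^{\downarrow}_{1+j}(Z^*Z)\big)^2$, hence $\lambda^{\downarrow}_{1+2j}(|Z\circ Z^*|)\le \delta^{\downarrow}_{1+j}(Z^*Z)$.

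Next I would obtain the symmetric bound with $ZZ^*$ in place of $Z^*Z$. For this, take instead $A=Z^*$, $B=I$, or equivalently apply the first step to $Z^*$ in place of $Z$ and observe $Z^*\circ (Z^*)^* = Z^*\circ Z = (Z\circ Z^*)^*$, so $|Z^*\circ (Z^*)^*| = |(Z\circ Z^*)^*|$, which has the same eigenvalues as $|Z\circ Z^*|$. Meanwhile the role of $Z^*Z$ is now played by $(Z^*)^*(Z^*) = ZZ^*$, giving $\lambda^{\downarrow}_{1+2j}(|Z\circ Z^*|)\le \delta^{\downarrow}_{1+j}(ZZ^*)$. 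Taking the minimum of the two estimates gives the stated inequality.

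The one point requiring care — and the main (minor) obstacle — is the bookkeeping of conjugates and transposes in identifying $A^*B\circ B^*A$ with $Z\circ Z^*$ (respectively $(Z\circ Z^*)^*$), and in checking that $\lambda^{\downarrow}_j(|W|)=\lambda^{\downarrow}_j(|W^*|)$ is used legitimately (it holds since $|W|$ and $|W^*|$ are unitarily equivalent via the polar decomposition). Once the substitution $B=Z$, $A=I$ (and its mirror $A=Z^*$, $B=I$) is made, everything reduces to the already-proven Corollary \ref{corShur4} together with the elementary remark that the eigenvalues of $I\circ H$ for Hermitian $H$ are its diagonal entries in decreasing order, i.e.\ the $\delta^{\downarrow}_k(H)$.
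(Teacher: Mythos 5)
Your proposal is correct and is essentially the paper's argument: the paper applies Corollary \ref{corShur2} with $j=k$ and $\diamond=\circ$ to the positive matrices $\begin{bmatrix} I & Z\\ Z^* & Z^*Z\end{bmatrix}$ and $\begin{bmatrix} I & Z^*\\ Z & ZZ^*\end{bmatrix}$, which is exactly what your invocation of Corollary \ref{corShur4} with $A=I$, $B=Z$ (resp.\ $B=Z^*$) amounts to, since Corollary \ref{corShur4} comes from \eqref{spe}. Your worry about adjoints is harmless anyway, because the Schur product is commutative, so $Z^*\circ Z=Z\circ Z^*$ and this matrix is Hermitian.
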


\vskip 5pt
In particular, we have
$
 \lambda_{3}^{\downarrow}\left(|Z\circ Z^*|\right) \le \delta^{\downarrow}_{2}(Z^*Z).
$

\vskip 10pt
\begin{proof} Consider the positive matrices
$$\begin{bmatrix} I &Z \\
Z^* &Z^*Z\end{bmatrix} \quad{\mathrm{and}}\quad
\begin{bmatrix} I &Z^* \\
Z &ZZ^*\end{bmatrix}
$$
and  apply with $j=k$ Corollary \ref{corShur2} for $\diamond=\circ$.
\end{proof}

\vskip 10pt
 Let $Z\in\bM_n$. The polar decomposition $Z=|Z^*|^{1/2}V|Z|^{1/2}$ shows that the block matrix
\begin{equation}\label{trick2}
\begin{bmatrix} |Z^*| &  Z \\
Z^* &|Z|\end{bmatrix} 
\end{equation}
is positive. The next two corollaries follows from our previous results applied to this positive block matrix. The weak log-majorisation relation in $\bM_n^+$, $X\prec_{w\log} Y$, is equivalent to  
$
\| X^{\alpha}\| \le \| Y^{\alpha}\|
$
for all $\alpha>0$ and all unitarily invariant norms.

\vskip 10pt
\begin{cor}\label{corA} Let $Z\in\bM_n$  and let $\diamond\in\{+,\circ \} $. Then,  $|Z\diamond Z^*|\prec_{wlog} |Z| \diamond |Z^*|$.
\end{cor}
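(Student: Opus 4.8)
The plan is to recognize the statement as an immediate application of Corollary \ref{corShur3} to the particular positive block matrix \eqref{trick2}.

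First I would recall, as noted just above the statement, that the polar decomposition $Z=|Z^*|^{1/2}V|Z|^{1/2}$ shows that
$$
\begin{bmatrix} |Z^*| & Z \\ Z^* & |Z| \end{bmatrix}\ge 0 ,
$$
so this matrix is a positive partitioned matrix in $\bM_n$ whose diagonal blocks are $A=|Z^*|$ and $B=|Z|$ and whose off-diagonal block is $X=Z$. Thus the hypotheses of Corollary \ref{corShur3} are met, and we may feed this particular matrix into it.

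Next, apply Corollary \ref{corShur3} with these choices of $A,B,X$ and with $\diamond\in\{+,\circ\}$. Since $X\diamond X^*=Z\diamond Z^*$ and $A\diamond B=|Z^*|\diamond|Z|=|Z|\diamond|Z^*|$ (both operations being commutative), the corollary gives
$$
\left\| |Z\diamond Z^*|^{\alpha}\right\| \le \left\|\bigl(|Z|\diamond|Z^*|\bigr)^{\alpha}\right\|
$$
for all $\alpha>0$ and all unitarily invariant norms. Here one should note that $|Z|\diamond|Z^*|$ is itself a positive matrix — trivially so for $\diamond=+$, and for $\diamond=\circ$ because it is the Schur product of two positive matrices, hence positive by the Schur product theorem — so that raising it to the power $\alpha$ is legitimate and the quantity $\||Z\diamond Z^*|\prec_{w\log}|Z|\diamond|Z^*|$ makes sense in $\bM_n^+$.

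Finally, by the equivalence recalled just before the statement, namely that $P\prec_{w\log}Q$ for $P,Q\in\bM_n^+$ is the same as $\|P^{\alpha}\|\le\|Q^{\alpha}\|$ for all $\alpha>0$ and all unitarily invariant norms, the displayed chain of norm inequalities is exactly the asserted weak log-majorisation $|Z\diamond Z^*|\prec_{w\log}|Z|\diamond|Z^*|$. Every step is either a direct substitution or an appeal to an already-proved result, so there is no genuine obstacle; the only point requiring a moment's care is the positivity of $|Z|\circ|Z^*|$, which is precisely the Schur product theorem.
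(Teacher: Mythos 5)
Your proof is correct and is exactly the argument the paper intends: apply Corollary \ref{corShur3} to the positive block matrix \eqref{trick2} with $A=|Z^*|$, $B=|Z|$, $X=Z$, and translate the resulting norm inequalities into weak log-majorisation via the equivalence stated just before the corollary. Aside from a small typographical slip in the sentence about positivity of $|Z|\circ|Z^*|$, nothing is missing.
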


\vskip 10pt
\begin{cor}\label{corB} Let $Z\in\bM_n$  and let $\diamond\in\{+,\circ \} $. Then for some symmetry $V_{\diamond}\in\bM_n$,
$$|Z\diamond Z^*| \le (|Z|\diamond |Z^*|) \#V_{\diamond}(|Z|\diamond |Z^*|)V_{\diamond}$$
\end{cor}

\vskip 10pt
Corollary \ref{corA} was first noted in \cite[Corollary 2.13]{BL2016}. The stronger Corollary \ref{corB}  was first  obtained in \cite[Corollary 3.6]{BL2018}.

We close this section by some direct consequences of \eqref{trick}.

\vskip 10pt
\begin{prop}\label{prop0}  Let $\begin{bmatrix} A &X \\
X^* &B\end{bmatrix} $ be a positive matrix  partitioned into four blocks in $\bM_n$. Then, for some unitary  $U\in\bM_{2n}$,
$$
|X| \oplus |X| \le (A\oplus B)\#U(A\oplus B)U^*.
$$
\end{prop}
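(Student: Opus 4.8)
The plan is to deduce the inequality from a single application of the trick \eqref{trick}, in the same spirit as the other consequences of \eqref{trick} above, after rewriting the data as a $2\times 2$ block matrix of size $2n$ whose off-diagonal block is \emph{Hermitian}.

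First I would put down, beside the given $\begin{bmatrix}A&X\\X^*&B\end{bmatrix}\ge 0$, the companion positive matrix $\begin{bmatrix}B&X^*\\X&A\end{bmatrix}\ge 0$, obtained by the unitary congruence implemented by $\begin{bmatrix}0&I\\I&0\end{bmatrix}$. Their direct sum is positive in $\bM_{4n}$; conjugating it by the block-permutation matrix that reorders the four $\bM_n$-blocks as ``($A$ from the first summand, $B$ from the second, $A$ from the second, $B$ from the first)'' turns it into
\[
\begin{bmatrix} A\oplus B & \mathcal{Y}\\ \mathcal{Y} & A\oplus B\end{bmatrix}\ge 0,
\qquad
\mathcal{Y}:=\begin{bmatrix} 0 & X\\ X^* & 0\end{bmatrix}\in\bM_{2n},
\]
a positive block matrix whose two diagonal blocks are both $A\oplus B$ and whose off-diagonal block $\mathcal{Y}$ is Hermitian. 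Verifying this rearrangement is pure bookkeeping and is the only step that needs any attention.

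Now I would apply \eqref{trick} to this block matrix. Since $\mathcal{Y}$ is Hermitian, its polar decomposition $\mathcal{Y}=V|\mathcal{Y}|$ can be taken with $V$ a symmetry, and
\[
|\mathcal{Y}|=(\mathcal{Y}^2)^{1/2}=\begin{bmatrix} XX^* & 0\\ 0 & X^*X\end{bmatrix}^{1/2}=|X^*|\oplus|X|.
\]
Therefore \eqref{trick} gives $|X^*|\oplus|X|=|\mathcal{Y}|\le (A\oplus B)\,\#\,V(A\oplus B)V$, which is the inequality of the proposition with $U:=V$, in fact a symmetry. (The argument produces $|X^*|\oplus|X|$ on the left: $XX^*$ is attached to the $A$-side of the partition and $X^*X$ to the $B$-side, which is the natural reading of the statement.)

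I do not expect a genuine difficulty. Two small points: the block permutation in the middle step is purely combinatorial; and if $A,B,X$ are only positive semidefinite, nothing changes, because the two facts used inside \eqref{trick}, namely that a Hermitian matrix factors as a symmetry times its absolute value and the maximality property of $\#$, both remain valid in the semidefinite case (the latter by the usual limiting argument).
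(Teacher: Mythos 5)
Your argument is correct and is, in substance, the paper's own proof in different packaging: the paper applies the trick \eqref{trick} twice, to $\begin{bmatrix}A&X\\X^*&B\end{bmatrix}$ and to its flip, getting $|X^*|\le A\#V_1BV_1^*$ and $|X|\le B\#V_2AV_2^*$, and then takes direct sums, using that $\#$ respects direct sums and unitary congruence; your block permutation of the $4n\times 4n$ direct sum followed by a single application of \eqref{trick} to the Hermitian block $\mathcal{Y}$ is the same computation, organized as in the proof of Theorem \ref{mainThm}, and it has the small advantage of producing a symmetry $U$ rather than just a unitary. Your bookkeeping (the permuted matrix $\begin{bmatrix}A\oplus B&\mathcal{Y}\\ \mathcal{Y}&A\oplus B\end{bmatrix}\ge 0$ with $\mathcal{Y}=\begin{bmatrix}0&X\\X^*&0\end{bmatrix}$, and $|\mathcal{Y}|=|X^*|\oplus|X|$) checks out, as does the remark about the semidefinite case.

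The one point to be careful about is the one you flag yourself: what you prove (and what the paper's proof actually yields) is $|X^*|\oplus|X|\le (A\oplus B)\#U(A\oplus B)U^*$, not literally $|X|\oplus|X|\le (A\oplus B)\#U(A\oplus B)U^*$. The paper's closing phrase ``since $|X|$ and $|X^*|$ are unitarily congruent'' does not quite bridge this: conjugating the inequality by $V_X^*\oplus I$ (where $X=V_X|X|$) turns the left side into $|X|\oplus|X|$ but simultaneously replaces the first factor $A\oplus B$ by $V_X^*AV_X\oplus B$. In fact the printed form can fail: take $A=\mathrm{diag}(1,0)$, $B=\mathrm{diag}(0,1)$, $X=\begin{pmatrix}0&1\\0&0\end{pmatrix}$; the block matrix is positive, yet any matrix of the form $(A\oplus B)\#W$ has range inside $\mathrm{ran}(A\oplus B)=\mathrm{span}\{e_1,e_4\}$, while $|X|\oplus|X|=\mathrm{diag}(0,1,0,1)$ does not, so no unitary $U$ works. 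Since $|X|\oplus|X|$ and $|X^*|\oplus|X|$ have the same eigenvalues and the proposition is only exploited through Weyl-type eigenvalue estimates (Corollaries \ref{cor-akext} and \ref{cor-akext2}), your version is exactly what is needed; just state the conclusion with $|X^*|\oplus|X|$ (or allow an outer unitary congruence of the right-hand side) rather than presenting it as a reading of the displayed statement.
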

 
\vskip 5pt
\begin{proof} From \eqref{trick}, we have $|X^*|\le A\#V_1BV_1^*$ and $|X|\le B\#V_2AV_2^*$ for some unitaries $V_1,V_2$. This gives the proposition since $|X|$ and $|X^*|$ are unitarily congruent.
\end{proof}

\vskip 5pt
 Proposition \ref{prop0} combined with Weyl's inequalities entails  the following result of Audeh and Kittaneh \cite{AK} :
$$
\lambda_{j+1}^{\downarrow}(|X|)=\lambda_{2j+1}^{\downarrow}(|X|\oplus|X|)\le \frac{1}{2} \left\{\lambda_{j+1}^{\downarrow}(A\oplus B) +\lambda_{j+1}^{\downarrow}(A\oplus B)\right\}, \quad j=0,\ldots,n-1
$$
so we get the main result of \cite{AK},
$$
\lambda_{j+1}^{\downarrow}(|X|) \le \lambda_{j+1}^{\downarrow}(A\oplus B)
, \quad j=0,\ldots,n-1.
$$

In fact Proposition \ref{prop0} and Weyl's inequalities show that we may complete Audeh-Kittaneh's result with a large number of inequalities, stated in our next corollary.
 
\vskip 5pt
\begin{cor}\label{cor-akext}Let $\begin{bmatrix} A &X \\
X^* &B\end{bmatrix} $ be a positive matrix  partitioned into four blocks in $\bM_n$.
Then, for all integers nonnegative integers $j,k,l$ such that   $2j=k+l$,
$$
\lambda_{j+1}^{\downarrow}(|X|)\le  \left\{\lambda_{k+1}^{\downarrow}(A\oplus B) \lambda_{l+1}^{\downarrow}(A\oplus B)\right\}^{1/2}.
$$
\end{cor}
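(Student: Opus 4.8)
The statement to be proved is Corollary \ref{cor-akext}, which combines Proposition \ref{prop0} with Weyl's multiplicative inequalities for eigenvalues. The plan is to start from the conclusion of Proposition \ref{prop0}, namely that
$$
|X|\oplus|X| \le (A\oplus B)\,\#\,U(A\oplus B)U^*
$$
for some unitary $U\in\bM_{2n}$, and then take eigenvalues on both sides. The left-hand side is the direct sum $|X|\oplus|X|$, whose eigenvalue list is the list of $|X|$ with each eigenvalue doubled; hence $\lambda_{2j+1}^{\downarrow}(|X|\oplus|X|)=\lambda_{j+1}^{\downarrow}(|X|)$ for $j=0,\ldots,n-1$. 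The right-hand side is a geometric mean of two positive matrices, and the point is to estimate its $(2j+1)$-st eigenvalue from above.

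First I would recall, as in \eqref{eq-geom}, that the geometric mean of two positive semidefinite matrices $Y,Z$ can be written $Y\#Z=Y^{1/2}WZ^{1/2}$ for some unitary $W$, so that the singular values (equivalently eigenvalues, since $Y\#Z\ge 0$) of $Y\#Z$ are the singular values of the product $Y^{1/2}WZ^{1/2}$. Applying Weyl's singular value inequality $s_{p+q-1}^{\downarrow}(MN)\le s_p^{\downarrow}(M)s_q^{\downarrow}(N)$ with $M=Y^{1/2}$, $N=WZ^{1/2}$ gives, for indices $p=k+1$ and $q=l+1$ with $2j=k+l$ (so $p+q-1=k+l+1=2j+1$),
$$
\lambda_{2j+1}^{\downarrow}(Y\#Z)\le \lambda_{k+1}^{\downarrow}(Y^{1/2})\,\lambda_{l+1}^{\downarrow}(Z^{1/2})
=\left\{\lambda_{k+1}^{\downarrow}(Y)\,\lambda_{l+1}^{\downarrow}(Z)\right\}^{1/2},
$$
where I also used that $W$ is unitary (so it does not affect singular values of $WZ^{1/2}$) and that $\lambda^{\downarrow}(Y^{1/2})=\{\lambda^{\downarrow}(Y)\}^{1/2}$.

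Next I would specialize $Y=A\oplus B$ and $Z=U(A\oplus B)U^*$, noting that $\lambda_{l+1}^{\downarrow}(Z)=\lambda_{l+1}^{\downarrow}(A\oplus B)$ since unitary conjugation preserves eigenvalues. Combining with $\lambda_{2j+1}^{\downarrow}(|X|\oplus|X|)=\lambda_{j+1}^{\downarrow}(|X|)$ and the operator inequality from Proposition \ref{prop0} (monotonicity of $\lambda_{2j+1}^{\downarrow}$), one obtains
$$
\lambda_{j+1}^{\downarrow}(|X|)=\lambda_{2j+1}^{\downarrow}(|X|\oplus|X|)\le \lambda_{2j+1}^{\downarrow}\big((A\oplus B)\#U(A\oplus B)U^*\big)\le \left\{\lambda_{k+1}^{\downarrow}(A\oplus B)\,\lambda_{l+1}^{\downarrow}(A\oplus B)\right\}^{1/2},
$$
which is exactly the claimed inequality. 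I expect no serious obstacle here: everything is a routine assembly of Proposition \ref{prop0}, the factorization \eqref{eq-geom}, and Weyl's inequality. The only point requiring a moment's care is the bookkeeping of indices — checking that $p+q-1=2j+1$ under the constraint $2j=k+l$, and making sure the indices stay within range $1,\ldots,2n$ — but this is immediate. The special case $k=l=j$ recovers the Audeh–Kittaneh inequality $\lambda_{j+1}^{\downarrow}(|X|)\le\lambda_{j+1}^{\downarrow}(A\oplus B)$ already noted in the text.
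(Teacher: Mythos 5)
Your proof is correct and follows essentially the same route the paper intends: Proposition \ref{prop0}, the factorization \eqref{eq-geom} of the geometric mean, and the Weyl--Horn singular value inequality for a product, with the index bookkeeping $(k+1)+(l+1)-1=2j+1$ and $\lambda_{2j+1}^{\downarrow}(|X|\oplus|X|)=\lambda_{j+1}^{\downarrow}(|X|)$ handled correctly (the out-of-range cases being trivial under the paper's convention $\lambda_i:=0$ for $i>n$).
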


Here we adhere to the convention that, for a positive matrix $A\in\bM_n$, $\lambda_i(A):=0$ whenever $i>n$. This natural convention avoid to add an extra assumption $j<n$ in Corollary \ref{cor-akext}. We keep this convention in the next statement, an extension of \cite[Theorem 6.3]{BK2}  (the case $k=l=j$ in Corollary \ref{cor-akext2}).

\vskip 5pt
\begin{cor}\label{cor-akext2} Let $A,B\in\bM_n$. Then, for all  non-negative integers $j,k,l$ such that $2j=k+l$,
$$
\lambda_{j+1}^{\downarrow}(|A+B|)\le  \left\{\lambda_{k+1}^{\downarrow}\left[(|A|+|B|)\oplus (|A^*|+|B^*|)\right] \cdot \lambda_{l+1}^{\downarrow}\left[(|A|+|B|)\oplus (|A^*| + |B^*|)\right]\right\}^{1/2}.
$$
\end{cor}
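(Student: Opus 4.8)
The plan is to deduce this from Corollary \ref{cor-akext} applied to a single well-chosen positive block matrix in $\bM_{2n}$ whose off-diagonal corner is $A+B$. The point is to manufacture diagonal blocks $A'$ and $B'$ whose orthogonal sum $A'\oplus B'$ is exactly $(|A|+|B|)\oplus(|A^*|+|B^*|)$ (up to a permutation similarity).

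First I would recall, from the polar decomposition $Z=|Z^*|^{1/2}V|Z|^{1/2}$ used around \eqref{trick2}, that both
$$
\begin{bmatrix} |A^*| & A \\ A^* & |A|\end{bmatrix}
\quad\text{and}\quad
\begin{bmatrix} |B^*| & B \\ B^* & |B|\end{bmatrix}
$$
are positive. Adding them, the matrix
$$
\begin{bmatrix} |A^*|+|B^*| & A+B \\ A^*+B^* & |A|+|B|\end{bmatrix}
$$
is positive too. Now I would apply Corollary \ref{cor-akext} to this matrix, with $X=A+B$, diagonal blocks $|A^*|+|B^*|$ and $|A|+|B|$. Since $|X|=|A+B|$, for all nonnegative integers $j,k,l$ with $2j=k+l$ this yields
$$
\lambda_{j+1}^{\downarrow}(|A+B|)\le \left\{\lambda_{k+1}^{\downarrow}\bigl[(|A^*|+|B^*|)\oplus(|A|+|B|)\bigr]\cdot\lambda_{l+1}^{\downarrow}\bigl[(|A^*|+|B^*|)\oplus(|A|+|B|)\bigr]\right\}^{1/2}.
$$

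Finally, interchanging the two orthogonal summands is a unitary (permutation) similarity, so $(|A^*|+|B^*|)\oplus(|A|+|B|)$ and $(|A|+|B|)\oplus(|A^*|+|B^*|)$ have the same eigenvalue sequence; substituting gives the stated inequality, with the convention $\lambda_i(\cdot)=0$ for $i>2n$ covering the full range of indices exactly as in Corollary \ref{cor-akext}. I do not anticipate any real obstacle here: the only genuine step is spotting that summing the two polar-decomposition blocks of $A$ and $B$ produces precisely the diagonal blocks $|A^*|+|B^*|$ and $|A|+|B|$ with off-diagonal corner $A+B$; the rest is the already-established Corollary \ref{cor-akext} plus the harmless commutativity of $\oplus$.
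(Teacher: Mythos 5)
Your proof is correct and is essentially the paper's own argument: the paper likewise applies Corollary \ref{cor-akext} to the positive matrix $\begin{bmatrix} |A^*|+|B^*| & A+B \\ A^*+B^* & |A|+|B|\end{bmatrix}$, whose positivity comes from summing the two block matrices of the form \eqref{trick2}. Your extra remark that swapping the summands in the direct sum does not change the eigenvalue sequence is a harmless observation the paper leaves implicit.
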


\vskip 10pt\noindent
\begin{proof} Applying Corollary  \ref{cor-akext} to the matrix (positive by \eqref{trick2})
\begin{equation*}
\begin{bmatrix} |A^*| +|B^*|&  A+B \\
A^*+B^* &|A| + |B|\end{bmatrix} 
\end{equation*}
yields the result.
\end{proof}

\section{Optimality of 1/4 and triangle inequalities}

We may use our main theorem to derive a result for the Schur product of normal matrices.  In \cite[Corollary 3.5]{BL2018} we obtained the following result by using positive linear maps.

\vskip 5pt
\begin{prop}\label{corschurnormal} If $A,B\in\bM_n$ are normal,  then, for some unitary $V\in\bM_n$,
\begin{equation*}
|A\circ B| \le |A|\circ|B|+ \frac{1}{4}V(|A|\circ|B|)V^*
\end{equation*}
where the constant $1/4$ is optimal.
\end{prop}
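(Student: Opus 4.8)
The plan is to recognize the statement as an instance of the argument behind Theorem \ref{mainThm}, applied to a positive partitioned matrix whose two diagonal blocks coincide, and then to prove sharpness of $1/4$ with an explicit $3\times3$ example. So I would first produce the right block matrix, then run the familiar chain of estimates, and finally separate out the optimality part, which I expect to be the delicate point.

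For the construction and the inequality: since $A$ is normal we have $|A^*|=|A|$, so \eqref{trick2} (with $Z=A$) gives $\begin{bmatrix}|A|&A\\A^*&|A|\end{bmatrix}\ge0$, and likewise $\begin{bmatrix}|B|&B\\B^*&|B|\end{bmatrix}\ge0$. Taking the Schur product of these two positive matrices (Schur's theorem) and using $(A\circ B)^*=A^*\circ B^*$, I obtain the positive block matrix $\begin{bmatrix}C&X\\X^*&C\end{bmatrix}\ge0$ with $C=|A|\circ|B|$ and $X=A\circ B$. Writing the polar decomposition $X=W|X|$ with $W$ unitary and conjugating by $\mathrm{diag}(W^*,I)$ replaces the off-diagonal block by the positive matrix $|X|$, so $\begin{bmatrix}W^*CW&|X|\\|X|&C\end{bmatrix}\ge0$; the maximal property of the geometric mean gives $|X|\le(W^*CW)\#C$, and \eqref{agm} in the form $P\#Q\le P+\tfrac14Q$ (with $P=C$, $Q=W^*CW$) yields $|A\circ B|=|X|\le C+\tfrac14 W^*CW=|A|\circ|B|+\tfrac14 V(|A|\circ|B|)V^*$ with $V=W^*$. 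This is exactly the chain \eqref{trick2}/Schur's theorem $\to$ the $\#$ maximal property $\to$ \eqref{agm} used to prove Theorem \ref{mainThm}, now applied with equal diagonal blocks.

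For optimality of $1/4$: in $\bM_3$ take the real symmetric (hence normal) matrices $A=\begin{pmatrix}0&1&0\\1&0&0\\0&0&1\end{pmatrix}$ and $B=\begin{pmatrix}0&1&0\\1&0&\sqrt3\\0&\sqrt3&0\end{pmatrix}$. Then $|A|=I$; from $B^2=\begin{pmatrix}1&0&\sqrt3\\0&4&0\\\sqrt3&0&3\end{pmatrix}$ one computes $|B|=\begin{pmatrix}1/2&0&\sqrt3/2\\0&2&0\\\sqrt3/2&0&3/2\end{pmatrix}$, while $A\circ B=\begin{pmatrix}0&1&0\\1&0&0\\0&0&0\end{pmatrix}$, so $|A\circ B|=\mathrm{diag}(1,1,0)$ and $|A|\circ|B|=I\circ|B|=\mathrm{diag}(\tfrac12,2,\tfrac32)$. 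Since $\|\,|A|\circ|B|\,\|=2$, for every unitary $V$ the $(1,1)$-entry of $|A|\circ|B|+cV(|A|\circ|B|)V^*$ is at most $\tfrac12+2c$, so $|A\circ B|\le|A|\circ|B|+cV(|A|\circ|B|)V^*$ forces $\tfrac12+2c\ge1$, i.e. $c\ge\tfrac14$; and $c=\tfrac14$ is realized by the symmetry $V$ transposing the first two coordinates.

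The inequality is routine given Section 2; the point needing care is the optimality. The conceptual reason $1/4$ cannot be lowered is that $\begin{bmatrix}C&X\\X^*&C\end{bmatrix}\ge0$ forces $\|Xe\|^2\le\|C\|\,\langle Ce,e\rangle$, so the largest attainable deficit $\langle(|X|-C)e,e\rangle/\|C\|$ is $r(1-r)\le\tfrac14$ with $r=\|X\|/\|C\|$; the obstacle is that hitting $r=\tfrac12$ together with the Cauchy–Schwarz equality with genuinely normal $A,B$ is impossible in $\bM_2$ (a $2\times2$ normal matrix whose modulus is a non-scalar diagonal compatible with a nontrivial Schur coupling is forced to have scalar modulus), which is precisely why the extremal example above lives in $\bM_3$.
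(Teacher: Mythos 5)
Your argument is correct, and for the inequality it is essentially the paper's own device applied one step more directly: the paper forms exactly your block matrix $\begin{bmatrix} |A|\circ|B| & A\circ B\\ A^*\circ B^* & |A|\circ|B|\end{bmatrix}$ (positive as a Schur product of two matrices of type \eqref{trick2}), but uses it only to prove Proposition \ref{corschurnormal2}, while for Proposition \ref{corschurnormal} itself it relies on the earlier positive-linear-map proof of \cite[Corollary 3.5]{BL2018}. You instead apply the trick \eqref{trick} to that same block matrix (polar decomposition of the off-diagonal block, conjugation by $\mathrm{diag}(W^*,I)$, maximal property of $\#$, then \eqref{agm} in the form $P\# Q\le P+\tfrac14 Q$), which yields a self-contained proof of the inequality entirely within this paper's toolkit; that is a legitimate and arguably cleaner route than quoting the positive-linear-map argument.

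On the optimality part: your $3\times 3$ example is correct (the computations of $|B|$, $A\circ B$ and $|A|\circ|B|$ check out), and the $(1,1)$-entry estimate $1\le \tfrac12+c\left\| \,|A|\circ|B|\,\right\|=\tfrac12+2c$ indeed forces $c\ge\tfrac14$, so the constant cannot be lowered for $n\ge 3$ (and, by zero-padding, for all larger $n$). However, your closing ``conceptual'' claim that an extremal example with normal $A,B$ is impossible in $\bM_2$ is false: the paper's own choice $A=\begin{pmatrix}2&1\\1&1/2\end{pmatrix}$, $B=\begin{pmatrix}0&1\\1&0\end{pmatrix}$ gives $|A\circ B|=I$ and $|A|\circ|B|=\mathrm{diag}(2,1/2)$, and the $(2,2)$-entry argument forces $c\ge\tfrac14$ already in dimension $2$; this also settles the case $n=2$, which your $3\times3$ example leaves open. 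This does not affect the validity of your proof of the stated proposition, but the final remark should be deleted (or replaced by the $2\times 2$ example, which covers all $n\ge 2$).
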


We complete this estimate with a new one.

\vskip 5pt
\begin{prop}\label{corschurnormal2} If $A,B\in\bM_n$ are normal,  then, for some unitary $V\in\bM_n$,
\begin{equation*}
\frac{\left|A\circ B+A^*\circ B^*\right|}{2} \le  |A|\circ|B|+ \frac{1}{4}V(|A|\circ|B|)V^*
\end{equation*}
where the constant $1/4$ is optimal.
\end{prop}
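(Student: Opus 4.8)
The plan is to mimic the derivation of Proposition~\ref{corschurnormal}: find a suitable positive $2\times2$ block matrix whose off-diagonal block is (a unitary times) $A\circ B + A^*\circ B^*$ and whose diagonal blocks are $|A|\circ|B|$, then feed it into the $\diamond=+$ case of Theorem~\ref{mainThm}. The starting point is that for a normal matrix $N$ one has $|N|=|N^*|$, so the polar decomposition gives that $\begin{bmatrix} |A| & A \\ A^* & |A|\end{bmatrix}$ and $\begin{bmatrix} |B| & B \\ B^* & |B|\end{bmatrix}$ are both positive (this is the special case of \eqref{trick2} when $Z$ is normal).

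The key step is to take the Schur product of these two positive block matrices. Partitioning $\bM_{2n}=\bM_2\otimes\bM_n$ and using that the Schur product of positive matrices is positive, we get that
$$
\begin{bmatrix} |A|\circ|B| & A\circ B \\ A^*\circ B^* & |A|\circ|B|\end{bmatrix}
$$
is positive (here the Schur product is taken within $\bM_n$ blockwise — a point one should state carefully, since the Hadamard product of $2n\times2n$ matrices in a chosen basis does respect a $2\times2$ block structure). Now the off-diagonal block of this positive matrix is $X:=A\circ B$ and $X^*=A^*\circ B^*$, while the diagonal blocks are $A':=B':=|A|\circ|B|$. Applying the $\diamond=+$ case of Theorem~\ref{mainThm} to $\begin{bmatrix} A' & X \\ X^* & B'\end{bmatrix}$ yields
$$
|X+X^*| \le (A'+B') + \tfrac14 V(A'+B')V^*
$$
for some unitary (indeed symmetry) $V$, i.e.\ $|A\circ B + A^*\circ B^*| \le 2|A|\circ|B| + \tfrac12 V(|A|\circ|B|)V^*$; dividing by $2$ gives the claimed inequality.

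For optimality of the constant $1/4$, the natural approach is to reuse the rank-one type example from the proof of Theorem~\ref{mainThm}, or the example in Proposition~\ref{corschurnormal}, adapted so that $A,B$ are normal and $A\circ B$ is already Hermitian (so that $A\circ B + A^*\circ B^* = 2(A\circ B)$ and the left-hand side is just $|A\circ B|$, reducing the question to the sharpness already established in Proposition~\ref{corschurnormal}). Concretely one expects to take $A,B$ diagonal plus a suitable symmetric-support perturbation; if $A\circ B$ comes out Hermitian then the new inequality degenerates exactly to the old one and sharpness is inherited.

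The main obstacle I anticipate is purely bookkeeping rather than conceptual: one must be scrupulous that the Hadamard product on $\bM_{2n}$, in the basis that realizes the $2\times2$ block decomposition, acts blockwise as the Hadamard product on $\bM_n$ — this is what legitimizes passing from ``$\begin{bmatrix}|A|&A\\A^*&|A|\end{bmatrix}\circ\begin{bmatrix}|B|&B\\B^*&|B|\end{bmatrix}\ge 0$'' to the positivity of the displayed block matrix with blocks $|A|\circ|B|$ and $A\circ B$. Once that identification is in place, everything else is a direct invocation of Theorem~\ref{mainThm}. A secondary, minor point is confirming that the normality of $A$ and $B$ is genuinely used only through $|A|=|A^*|$, $|B|=|B^*|$, so that \eqref{trick2} collapses to the symmetric-diagonal form needed; this should be immediate.
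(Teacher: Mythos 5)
Your proof of the inequality itself is correct and is exactly the paper's argument: take the two positive matrices $\begin{bmatrix} |A| & A\\ A^*&|A|\end{bmatrix}$ and $\begin{bmatrix} |B| & B\\ B^*&|B|\end{bmatrix}$ (the normal case of \eqref{trick2}, using $|A|=|A^*|$, $|B|=|B^*|$), form their Schur product to get the positive matrix $\begin{bmatrix} |A|\circ|B| & A\circ B\\ A^*\circ B^* & |A|\circ|B|\end{bmatrix}$, apply Theorem \ref{mainThm} with $\diamond=+$, and divide by $2$. The blockwise point you worry about is indeed harmless: the Hadamard product of $2n\times 2n$ matrices is entrywise, hence acts block-by-block in the fixed basis realizing the $2\times 2$ partition, which is precisely how the paper justifies positivity.

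The only genuine gap is the optimality claim, which is part of the statement but which you only sketch ("one expects to take\dots"), without exhibiting a pair $(A,B)$. Your reduction idea is the right one, and the paper completes it with the concrete example
$$
A=\begin{pmatrix} 2& 1 \\ 1& 1/2 \end{pmatrix}, \qquad B=\begin{pmatrix} 0& 1 \\ 1& 0 \end{pmatrix},
$$
both Hermitian (hence normal), for which $A\circ B=\begin{pmatrix}0&1\\1&0\end{pmatrix}$ is Hermitian, so the left-hand side equals $|A\circ B|=I$, while $|A|\circ|B|=\mathrm{diag}(2,1/2)$. If the inequality held with a constant $c<1/4$, then $\mathrm{diag}(-1,1/2)\le c\,V\mathrm{diag}(2,1/2)V^*$ would force $1/2\le 2c$ by comparing largest eigenvalues, a contradiction; this is exactly the verification your plan needs, so you should include such an example (or an equivalent one) to finish the proof.
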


This is a direct consequence of Theorem \ref{mainThm} applied to the matrix
$$
\begin{bmatrix} |A|\circ |B| & A\circ B \\ A^*\circ B^*& |A|\circ |B| \end{bmatrix}
$$
Indeed, this matrix is positive as the Schur product of two matrices of the form \eqref{trick2}. To see that the constant $1/4$ is optimal in theses two propositions,  pick
$$
A=\begin{pmatrix} 2& 1 \\ 1& 1/2 \end{pmatrix}, \quad B=\begin{pmatrix} 0& 1 \\ 1& 0 \end{pmatrix}.
$$

We can use Theorem \ref{mainThm} to get further sharp  inequalities involving the constant 1/4. The next corollary deals with 
 two Hermitian matrices $S$ and $T$ such that  $S\le T$ and $-S\le T$, in short $\pm S\le T$. Contrarily to the obvious commutative case, we cannot infer, for $j>1$, the eigenvalue inequality, $\lambda_j^{\downarrow}(|S|)\le \lambda_j^{\downarrow}(T)$. For instance the ratio  $\lambda_2^{\downarrow}(|S|)/\lambda_2^{\downarrow}(T)$ can be arbitrarily large, 
as shown with $S:=P-Q$, $T:=P+Q$, where $P,Q$ are the rank one projections
$$
P=\begin{pmatrix} 1&0\\ 0&0 \end{pmatrix}, \quad Q=\begin{pmatrix} \cos^2a &\cos a\sin a\\ \cos a\sin a&\sin^2a \end{pmatrix}.
$$
Then, 
$$
\lim_{a\to 0^+}\frac{\lambda_2^{\downarrow}(|P-Q|)}{\lambda_2^{\downarrow}(P+Q)}=\lim_{a\to 0^+}\frac{\sin a}{1-\cos a} =+\infty.
$$
 However, we can state :
 
\begin{cor} If $S,T$ are Hermitian and satisfy $\pm S \le T$, then, for some unitary $V$,
$$
|S|\le T +\frac{1}{4}VTV^*.
$$
The constant $1/4$ is the smallest possible one, and we can require that $V$ is a symmetry.
\end{cor}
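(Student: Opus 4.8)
The plan is to realize $S$ as an off-diagonal block and $T$ as a diagonal block of a positive partitioned matrix, and then quote Theorem \ref{mainThm}. The observation driving everything is that the hypothesis $\pm S\le T$ is \emph{exactly} the positivity of $\begin{bmatrix} T & S\\ S & T\end{bmatrix}$: conjugating this block matrix by the symmetry $\frac{1}{\sqrt2}\begin{bmatrix} I & I\\ I & -I\end{bmatrix}$ produces $\begin{bmatrix} T+S & 0\\ 0 & T-S\end{bmatrix}$, which is positive precisely when $T+S\ge 0$ and $T-S\ge 0$.

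Granting this, I would apply the $\diamond=+$ case of Theorem \ref{mainThm} to $\begin{bmatrix} A & X\\ X^* & B\end{bmatrix}:=\begin{bmatrix} T & S\\ S & T\end{bmatrix}$, so that $A=B=T$ and $X=X^*=S$. Then $X+X^*=2S$ and $A+B=2T$, and the theorem furnishes a symmetry $V$ with $|2S|\le 2T+\frac14 V(2T)V$; dividing by $2$ gives $|S|\le T+\frac14 VTV^*$ with $V$ a symmetry, as claimed. (The geometric-mean refinement $|S|\le T\# VTV$ falls out the same way from the second inequality of Theorem \ref{mainThm}.)

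For the optimality of $1/4$, I would use the standard fact that the existence of a unitary $V$ with $|S|\le T+cVTV^*$ forces, via Weyl's monotonicity principle, $\lambda_j^{\downarrow}(|S|-T)\le c\,\lambda_j^{\downarrow}(T)$ for all $j$. Hence it suffices to exhibit Hermitian $S,T$ with $\pm S\le T$ and $\lambda_1^{\downarrow}(|S|-T)=\frac14\lambda_1^{\downarrow}(T)$. Take $S=\begin{pmatrix} 0 & 1\\ 1 & 0\end{pmatrix}$ and $T=\begin{pmatrix} 1/2 & 0\\ 0 & 2\end{pmatrix}$: the matrices $T\pm S$ have determinant $0$ and positive trace, hence are positive, while $|S|=I$ gives $|S|-T=\mathrm{diag}(1/2,-1)$, so $\lambda_1^{\downarrow}(|S|-T)=1/2=\frac14\cdot 2=\frac14\lambda_1^{\downarrow}(T)$; any constant below $1/4$ fails already at the first eigenvalue.

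I do not expect a genuine obstacle: the whole content is the reduction in the first paragraph, namely spotting that $\pm S\le T \Longleftrightarrow \begin{bmatrix} T & S\\ S & T\end{bmatrix}\ge 0$, which places the statement inside Theorem \ref{mainThm}. The only point requiring a little care is the equivalence invoked for optimality (an operator inequality $H\le VKV^*$ for some unitary $V$ is equivalent to $\lambda_j^{\downarrow}(H)\le\lambda_j^{\downarrow}(K)$ for every $j$), which is routine.
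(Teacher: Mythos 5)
Your proposal is correct and follows essentially the same route as the paper: you reduce $\pm S\le T$ to the positivity of $\begin{bmatrix} T & S\\ S & T\end{bmatrix}$ via conjugation by $\frac{1}{\sqrt2}\begin{bmatrix} I & I\\ I & -I\end{bmatrix}$, apply Theorem \ref{mainThm} with $A=B=T$, $X=S$, and certify optimality with the same $2\times 2$ example $T=\mathrm{diag}(1/2,2)$, $S=\begin{pmatrix}0&1\\1&0\end{pmatrix}$ together with the standard eigenvalue reformulation already used in the theorem's proof. No gaps.
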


\begin{proof} The assumption entails that
$$
\begin{bmatrix} T+S&  0 \\
0 &T-S\end{bmatrix} 
$$
is positive. The unitary congruence implemented by
$$
\frac{1}{\sqrt{2}}\begin{bmatrix} I&  -I \\
I &I\end{bmatrix} 
$$
then shows that
$$
\begin{bmatrix} T&  S \\
S &T\end{bmatrix} 
$$
is positive too. Applying Theorem \ref{mainThm} to this matrix yields the inequality of our corollary for $\diamond=+$.
Choosing
$$
T=\begin{pmatrix} 2&  0 \\
0 &1/2\end{pmatrix}, \quad 
S=\begin{pmatrix} 0&  1 \\
1 &0\end{pmatrix}
$$
shows that the constant $1/4$ cannot be diminished.
\end{proof}

\vskip 5pt
\begin{cor}\label{cortriangle} Let
$A_1,\ldots, A_k$  be contractions in $\bM_n$, $k>1$. Then,  
$$
   \left| \sum_{j=1}^k A_j\right| \le  \frac{k}{4}I +\sum_{j=1}^k |A_j|.
$$
If $k$ is even and $n\ge 3$, then the constant $k/4$ is sharp.
\end{cor}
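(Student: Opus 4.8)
The plan is to deduce the inequality from Proposition \ref{prop0} (equivalently, from Theorem \ref{mainThm} applied to a Hermitian dilation of $\sum_j A_j$). Write $Z=\sum_{j=1}^k A_j$. Since each $A_j$ is a contraction, the polar decomposition trick \eqref{trick2} shows that each block matrix $\begin{bmatrix} |A_j^*| & A_j \\ A_j^* & |A_j|\end{bmatrix}$ is positive, hence so is their sum $\begin{bmatrix} A & Z \\ Z^* & B\end{bmatrix}$ with $A:=\sum_j |A_j^*|$ and $B:=\sum_j|A_j|$. Proposition \ref{prop0}, combined with the operator AGM inequality \eqref{agm} (used with $s=1$, i.e.\ $P\#Q\le P+\tfrac14 Q$), then produces a unitary $U\in\bM_{2n}$ with $|Z|\oplus|Z|\le (A\oplus B)+\tfrac14 U(A\oplus B)U^*$.

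Next I would compress this operator inequality to its lower–right $n\times n$ corner. The $(2,2)$-block of $|Z|\oplus|Z|$ is $|Z|$, that of $A\oplus B$ is $B$, and, since $U(A\oplus B)U^*\le\|A\oplus B\|\,I_{2n}$, the $(2,2)$-block of $\tfrac14 U(A\oplus B)U^*$ is dominated by $\tfrac14\|A\oplus B\|\,I_n$. Thus $|Z|\le B+\tfrac14\|A\oplus B\|\,I_n$. Because each $A_j$ is a contraction, $\|A\oplus B\|=\max\{\|A\|,\|B\|\}\le \sum_j\|A_j\|\le k$, and we conclude $\bigl|\sum_j A_j\bigr|\le \sum_j|A_j|+\tfrac k4 I$. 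All of this is routine bookkeeping once the positive block matrix is in place.

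For the optimality of $k/4$ when $k=2m$ is even and $n\ge 3$, I would first reduce to $k=2$ by a repetition argument: if $A_1,A_2\in\bM_3$ are contractions with $\lambda_1^{\downarrow}\bigl(|A_1+A_2|-|A_1|-|A_2|\bigr)=\tfrac12$, then setting $B_{2i-1}=A_1$ and $B_{2i}=A_2$ for $i=1,\dots,m$ gives contractions with $\bigl|\sum_j B_j\bigr|-\sum_j|B_j|=m\bigl(|A_1+A_2|-|A_1|-|A_2|\bigr)$, whose top eigenvalue is $m/2=k/4$; this rules out any constant below $k/4$. The remaining, and I expect most delicate, point is to exhibit such an extremal pair $A_1,A_2$ in $\bM_3$ (the restriction $n\ge 3$ being genuine: in $\bM_2$ one checks the gap stays strictly below $1/2$). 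I would reverse-engineer the pair from the chain of inequalities above, forcing each estimate to be an equality: $\|A\oplus B\|=2$ must be attained, so $|A_1|+|A_2|$ (or $|A_1^*|+|A_2^*|$) has norm $2$, which forces the two $A_j$ to act isometrically on a common unit vector; the AGM step $P\#Q\le P+\tfrac14 Q$ must be an equality in the relevant direction, which — exactly as in the proof that $1/4$ is optimal in Theorem \ref{mainThm}, via the matrix \eqref{niceex} — pins down a ``swap''-type unitary and a $2:1/2$ spectral ratio between the diagonal blocks; and the compression to the $(2,2)$-corner must lose nothing. Carrying out this optimization and verifying the resulting $3\times 3$ contractions is the main obstacle; everything else is straightforward.
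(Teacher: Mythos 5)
Your proof of the inequality itself is correct, and it takes a somewhat different route from the paper: you sum the positive matrices $\begin{bmatrix}|A_j^*| & A_j\\ A_j^* & |A_j|\end{bmatrix}$ coming from \eqref{trick2} and then invoke Proposition \ref{prop0}, the AGM bound \eqref{agm} with $s=1$, a compression to the lower-right corner, and the norm bound $\|A\oplus B\|\le k$. The paper instead first treats Hermitian $A_j$ via the corollary ``$\pm S\le T\Rightarrow |S|\le T+\tfrac14 VTV^*$'' (itself a special case of Theorem \ref{mainThm}) and then passes to general contractions through the Hermitian dilations $\begin{bmatrix}0&A_j\\ A_j^*&0\end{bmatrix}$. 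Both arguments are valid; yours avoids the Hermitian reduction at the cost of the detour through $\bM_{2n}$ and the compression step (in fact, from your positive block matrix you could conclude even more directly: $|Z|\le B\,\#\,VAV^*\le B+\tfrac14 VAV^*\le B+\tfrac k4 I$, with no need for Proposition \ref{prop0}).

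The genuine gap is in the sharpness claim, which is part of the statement. Your reduction from even $k=2m$ to $k=2$ by repetition is exactly the paper's, but it rests entirely on the existence of two contractions $C_1,C_2\in\bM_3$ with $\lambda_1^{\downarrow}\bigl(|C_1+C_2|-|C_1|-|C_2|\bigr)=\tfrac12$, and this you do not produce: you only describe a ``reverse-engineering'' heuristic and explicitly call the construction ``the main obstacle.'' The paper supplies the missing ingredient concretely: with the orthogonal rank-one projections $P,Q$ of \eqref{exref}, $C_1=P-Q$ and $C_2=V(P-Q)V^*$ with $V=\diag(1,1,-1)$, one computes $|C_1+C_2|=\diag(1,1,0)$ and $|C_1|+|C_2|=\diag(2,\tfrac12,\tfrac32)$, so the difference has top eigenvalue exactly $\tfrac12$. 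Without such an explicit pair (or some other proof of its existence), the optimality of $k/4$ for even $k$ and $n\ge 3$ remains unproved in your write-up; note also that equality is attained only in one spectral direction, not as an operator identity, so a scheme that tries to force every inequality in your chain to be an operator equality is not guaranteed to terminate in an example. Your parenthetical claim that the gap stays strictly below $\tfrac12$ in $\bM_2$ is likewise asserted without argument, though it is not needed for the statement.
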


\vskip 5pt
\begin{proof} First, suppose that every matrix $A_i$ is Hermitian. Then $$\pm(A_1+\cdots+ A_k)\le \sum_{j=1}^k |A_j|$$ and the previous corollary  yields
$$
\left| \sum_{j=1}^k A_i\right| \le  \frac{1}{4}V(\sum_{j=1}^k |A_j|)V^* + \sum_{j=1}^k |A_j|.
$$
for some unitary $V$.
Since $|A_j|\le I$ for all $j$, we get the inequality of the corollary for $k$ Hermitian matrices,
$$
   \left| \sum_{j=1}^k A_j\right| \le  \frac{k}{4}I +\sum_{j=1}^k |A_j|.
$$
 To get the  general case, we apply the above inequality to the  $k$ Hermitian matrices,
\begin{equation}\label{abv}
\begin{bmatrix}
0&A_1 \\ A_1^*&0
\end{bmatrix}, \ldots, \begin{bmatrix}
0&A_k \\ A_k^*&0
\end{bmatrix}.
\end{equation}
A direct computation of the operator absolute values in \eqref{abv} completes the proof. 

 To check the sharpness of $k/4$ when $k$ is even and $n=3$, it suffices to consider the case $k=2$ and to find two contractions $C_1,C_2\in\bM_3$ such that the inequality
\begin{equation}\label{twocont}
|C_1+C_2|\le \frac{1}{2}I +|C_1|+|C_2|
\end{equation}
holds,  $1/2$ being optimal. For the general case $k=2l$, we then just pick $A_1,\ldots,A_l=C_1$, and $A_{l+1},\ldots A_{2l}=C_2$. To build up $C_1,C_2$, 
we follow an anonymous referee (see the next remark). Consider the two orthogonal ($PQ=QP=0$)  rank one projections 
\begin{equation}\label{exref}
P=\frac{1}{2}\begin{pmatrix} 1 & 1/2 & \sqrt{3}/{2} \\
1/2 & 1/4 & \sqrt{3}/4 \\
 \sqrt{3}/2  & \sqrt{3}/4 & 3/4
\end{pmatrix}, \quad 
Q=\frac{1}{2}\begin{pmatrix} 1 & -1/2 & -\sqrt{3}/2 \\
-1/2 & 1/4 & \sqrt{3}/4 \\
 -\sqrt{3}/2  & \sqrt{3}/4 & 3/4
\end{pmatrix}.
\end{equation}
Define 
 $C_1=P-Q$ and $C_2=V(P-Q)V^*$ where $$V=\begin{pmatrix} 1&0&0 \\ 0&1&0 \\ 0&0&-1\end{pmatrix}.$$
Then,
$$
|C_1+C_2|=\begin{pmatrix} 1&0&0 \\ 0&1&0 \\ 0&0&0\end{pmatrix}, \quad |C_1|+|C_2|=\begin{pmatrix} 2&0&0 \\ 0&1/2&0 \\ 0&0&3/2\end{pmatrix},
$$
showing that we cannot reduce $1/2$ in  \eqref{twocont}. \end{proof}

\begin{remark}\label{referee} The first version of this paper (still available on arXiv) proved optimality of the constant $k/4$ ($k=2l$) in Corollary \ref{cortriangle} only for $n\ge 18$ (for real or complex matrices), by using a proof with positive linear maps.  A generous referee pointed out  the above  example \eqref{exref} in $\bM_3$, allowing to considerably improve the condition on the dimension.   We are quite grateful   for this contribution. 
\end{remark}

We conjecture that the constant $k/4$ in Corollary \ref{cortriangle} is also optimal for all odd integers  $k>1$.

\vskip 5pt

Jean-Christophe Bourin

Université de Franche-Comté, CNRS, LmB, F-25000 Besançon, France

Email: jcbourin@univ-fcomte.fr

\vskip 15pt

Eun-Young Lee

Department of mathematics,   Kyungpook National University,   Daegu 702-701, Korea

Email: eylee89@knu.ac.kr

\end{document}